\def\mc{\mathcal}
\DeclareMathOperator{\pre}{pre}
\DeclareMathOperator{\suf}{suf}
\DeclareMathOperator{\Rep}{Rep}
\DeclareMathOperator{\Ret}{Ret}
\newcommand{\@chapapp}{\relax}%
\newcommand{\uRed}[1]{\textcolor{red}{\underline{#1}}}
\begin{document}
\title{Insertions Yielding Equivalent Double Occurrence Words}
\author{Daniel A. Cruz\\
University of South Florida, Tampa FL, 33620, USA\\
dcruz@mail.usf.edu
\and Margherita Maria Ferrari\\
University of South Florida, Tampa FL, 33620, USA\\
mmferrari@usf.edu
\and Nata\v sa  Jonoska\\
University of South Florida, Tampa FL, 33620, USA\\
jonoska@mail.usf.edu
\and Lukas Nabergall\\
University of Waterloo, Waterloo ON, N2L 3G1, Canada\\
lnaberga@uwaterloo.ca
\and Masahico Saito\\
University of South Florida, Tampa FL, 33620, USA\\
saito@usf.edu
}
\address{Department of Mathematics and Statistics, University of South Florida, Tampa FL, 33620, USA}
\maketitle

\runninghead{D.A. Cruz, M.M. Ferrari, N. Jonoska, L. Nabergall, M. Saito}{Insertions on Double Occurrence Words}

\begin{abstract}
A double occurrence word (DOW) is a word in which every symbol appears exactly twice; two DOWs are equivalent if one is a symbol-to-symbol image of the other.
We consider the so called repeat pattern ($\alpha\alpha$) and the return pattern ($\alpha\alpha^R$), with gaps allowed between the $\alpha$'s.
These patterns generalize square and palindromic factors of DOWs, respectively.
We introduce a notion of inserting repeat/return words into DOWs and study how two distinct insertions into the same word can produce equivalent DOWs.
Given a DOW $w$, we characterize the structure of $w$ which allows two distinct insertions to yield equivalent DOWs.
This characterization  depends on the locations of the insertions and on the length of the inserted repeat/return words and implies that when one inserted word is a repeat word and the other is a return word, then both words must be trivial (i.e., have only one symbol).
The characterization also introduces a method to generate families of words recursively.
 \end{abstract}

%%%%%%%%%%%%%%%%%%%%%%%%%
\section{Introduction}\label{s_intro}
%%%%%%%%%%%%%%%%%%%%%%%%%

A word $w$ over an alphabet $\Sigma$ is a double occurrence word (DOW) if each element of $\Sigma$ appears either zero or two times.
DOWs have been studied in relation to knot theory \cite{Burns2015,Gibson2011,Turaev2004}, mathematical logic \cite{Courcelle2008}, and algebraic combinatorics \cite{Shtylla2009}. 
DOWs are also known as Gauss words and are closely related to linear diagrams, chord diagrams, and circle graphs. 
In the context of genomics, DOWs and operations on DOWs have been used in studies of DNA rearrangement \cite{Braun2018,Burns2013,Ehrenfeucht2003,Jonoska2017}.
By modeling the DNA rearrangement process using DOWs, it was observed that over 95\% of the scrambled genome of the ciliate \textit{Oxytricha trifallax} could be described by iterative insertions of the ``repeat pattern'' ($\alpha\alpha$) and the ``return pattern'' ($\alpha\alpha^R$) \cite{Burns2016}.
Roughly speaking, a {\em pattern} is a sequence of 
variables, and we say that an instance of a pattern {\em appears in a word $w$} if each variable of the pattern can be mapped to a non-empty factor of $w$ \cite{Jonoska2017}.
The repeat pattern $\alpha\alpha$ generalizes square factors while the return pattern $\alpha\alpha^R$ generalizes palindromic factors.
We refer to instances of the repeat and return pattern as {\em repeat words} and {\em return words}, respectively \cite{Arredondo2014,Burns2016,Jonoska2017}.

Patterns in DNA rearrangement are discussed in \cite{Burns2016}, while transformations on DOWs where instances of patterns are deleted 
or inserted are considered in \cite{Jonoska2017}. 
In studies of DNA rearrangement, it has been observed that the insertion of a repeat or return pattern may have evolutionary significance~\cite{Chang2005}, and the process of obtaining one word from another by the insertion of a repeat or return word may be of interest. Relatedly, similar operations on words have been studied with applications to computational linguistics and natural language processing. In the literature, four so-called edit operations are primarily considered: insertion of a symbol, deletion of a symbol, substitution of one symbol with another, and transposition of two adjacent symbols \cite{Levenshtein,Needleman,Sankoff}. The pattern-based word transformations considered here may be regarded as a generalization of these edit operations. 

Here we  define a notion of inserting repeat and return words in DOWs at prespecified indices. We consider equivalence classes of DOWs where two DOWs are equivalent if one 
is obtained from the other by a symbol-to-symbol morphism. 
Equivalent words correspond to the same chord diagrams, as well as isomorphic assembly graphs.
The main question considered here is under which conditions two distinct insertions into the same word can produce equivalent DOWs. 
A pair of insertions in a given DOW falls in one of the three types: interleaving, nested, and sequential (see Section \ref{s_equiv} for  definitions). The paper characterizes the words that yield the equivalent results in each of these situations.

%%%%%%%%%%%%%%%%%%%%%%%%%
\section{Background}\label{s_backg}
%%%%%%%%%%%%%%%%%%%%%%%%%

An {\em ordered alphabet} $\Sigma$ is a countable set with a linear order that is bounded below which can naturally be identified with the set of natural numbers $\mathbb{N} = \{1, 2, \ldots\}$, so we set $\Sigma = \mathbb{N}$ throughout the rest of the paper.
For $n \in \mathbb{N}$, we denote $ \{1, 2, \ldots, n\}$ by $[n]$.
For the remainder of the paper, we reserve the letters $a,b$ as {\it symbols} in $\Sigma$, and reserve the letters  $s,t,u,\ldots,z$ as ``words'' in $\Sigma^\ast$ (defined below).

We use standard definitions and conventions (e.g.,  \cite{Choffrut1997,Courcelle2008,Lothaire2002,Jonoska2017}).
A {\em word $w$ over $\Sigma$} is a finite sequence of symbols $a_1\cdots a_n$ in $\Sigma$; the  {\em length of $w$}, denoted $|w|$, is $n$.
The set of all words over $\Sigma$ is denoted by $\Sigma^*$ and  includes the empty word $\epsilon$ whose length is $0$; and  $\Sigma^+=\Sigma^*\setminus \{\epsilon\}$.
The set of all symbols $\{a_1,\ldots,a_n\}$ comprising $w$ is denoted by $\Sigma[w]$.
The {\em reverse of } $w = a_{1}a_{2}\cdots a_{n}$ ($a_{i} \in \Sigma$) is the word $w^{R} = a_{n}\cdots a_{2}a_{1}$.
The word $v$ is a {\em factor} of the word $w$, denoted $v \sqsubseteq w$, if $\exists w_1, w_2 \in \Sigma^*$ such that $w = w_1 v w_2$;  if $w_1=\epsilon$ then $v$ is  a {\it prefix} of $w$, while if $w_2=\epsilon$ then $v$ is a {\it suffix} of $w$; the word $w_1w_2$ is denoted by $w - v$. 
If $|v|<|w|$, then $v$ is a {\em proper} factor, prefix, or suffix as appropriate.
The set of common factors of $u, v \in \Sigma^{*}$ is denoted by $u \cap v$.
Let $1\leq d\leq n=|w|$, and write $w=a_1a_2 \cdots a_n$.
The prefix (suffix resp.) of $w$ with length $d$ is denoted $\pre(w,d)=a_1a_2\cdots a_d$ ($\suf(w,d)= a_{n-d+1} \cdots a_{n-1}a_n$ resp.).

A word $w$ in $\Sigma^\ast$ is a {\em double occurrence word} (DOW) if every symbol in $w$ appears either zero or twice.
We use $\Sigma_{DOW}$ to denote the set of all double occurrence words over $\Sigma$.
{\em Single occurrence words} (SOWs) denoted $\Sigma_{SOW}$ are nonempty words with distinct symbols.
Given $w\in\Sigma_{DOW}$, $|w|/2$ is the {\em size of $w$}.
Given $u\in\Sigma_{SOW}$, we say that $uu$ is a {\em repeat word in $w$} if $w = z_1 u z_2 u z_3$ for some $z_1,z_2,z_3\in\Sigma^\ast$.
Similarly, $uu^R$ is a {\em return word in $w$} if $w = z_1 u z_2 u^R z_3$ for some $z_1,z_2,z_3\in\Sigma^\ast$.

A morphism $f$ on $\Sigma^*$ induced by a bijection (symbol-to-symbol map) on $\Sigma$ is called an {\it equivalence map}.
We write $w_1\sim w_2$ if there is an equivalence map $f$ such that $f(w_1)=w_2$.
The relation $\sim$ is an equivalence on $\Sigma^\ast$.
A word $w=a_1 a_2 \cdots a_n$ is in {\em ascending order} if $a_1$ is the least element in the alphabet and the first appearance of a symbol is one greater than the largest of all preceding symbols in the word \cite{Burns2013}.
Since a word in ascending order is unique in the considered alphabet, we take words in ascending order as {\em class representatives} of the equivalence classes determined by the relation $\sim$.

\begin{example}
Consider the words $w=121323$ and $w'=131232$. Note that $w\sim w'$, but $w$ is in ascending order while the word $w'$ is not because the symbol $3$ appears before $2$.
\end{example}

\begin{definition}\label{j_insert}
Let $w=z_1z_2z_3 \in\Sigma_{DOW}$ for some $z_1, z_2, z_3 \in \Sigma^\ast$ be in ascending order.
Let $u$ be a SOW over $\Sigma\setminus\Sigma[w]$ which is in ascending order and $|u|=\nu$.
Suppose $k$ and $\ell$ are such that $k-1=|z_1|$ and $\ell-1=|z_1z_2|$. Then 
\begin{itemize}
\setlength{\itemsep}{-3pt}
\item $w' = z_1 u z_2 u z_3$ is obtained from $w$ by a {\em repeat insertion} denoted 
$w'=w\star\rho(\nu,k,\ell)$, and
\item $w' = z_1 u z_2 u^R z_3$ is obtained from $w$ by a {\em return insertion} denoted 
$w'=w\star\tau(\nu,k,\ell)$.
\end{itemize}
\end{definition}

We do not specify the word $u$ in the notation of repeat and return insertions because the inserted word has distinct symbols from $w$ and consists of symbols immediately following the largest symbol of $w$; hence it is uniquely determined by its length.
%our concern is the case when two insertions yield equivalent words.
%However, we must specify the word $u$ when defining repeat and return words.

\begin{example} \label{insert_ex}
Let $w = 1232314554$, then
\begin{align*}
w\star \rho(2,4,6) &= 123\uRed{67}23\uRed{67}14554 = w_1 \qquad\qquad w\star\tau(2,7,11) = 123231\uRed{67}4554\uRed{76} = w_3\\
w\star \rho(2,2,4) &= 1\uRed{67}23\uRed{67}2314554 = w_2 \qquad\qquad\,\,\, w\star\tau(2,9,9) = 12323145\uRed{6776}54 = w_4
\end{align*}
Observe that $w_1\sim w_2$ and $w_3\sim w_4$ but $w_1\not\sim w_3$.
\end{example}

Let $v$ be a repeat or return word in $w\in\Sigma_{DOW}$; we write $v=uu'$ where $u'=u$ if $v$ is a repeat word and $u'=u^R$ if $v$ is a return word.
For the rest of the paper, we use $u'$ for a SOW $u$ to denote $u$ or $u^R$ as is appropriate in the context of repeat and return words.
We use the notation $w\star \mathcal{I}(\nu,k,\ell)$ to indicate that the insertion is either a repeat insertion or a return insertion.
We denote by {\em Rep} ({\em Ret} resp.) the set of all repeat insertions (return insertions resp.), and we write $\mathcal{I}\in\Rep$ ($\mathcal{I}\in\Ret$ resp.) to indicate that the insertion is a repeat insertion (return insertion resp.).
Observe that $\left(w\star\mathcal{I}(\nu,k,\ell)\right)^R \sim w^R\star\mathcal{I}(\nu,|w|-\ell+2,|w|-k+2)$.
We say that $uu$ or $uu^R$ is a {\em trivial} repeat or return word, respectively, if $|u|=1$, and an insertion $\mathcal{I}(1,k,\ell)$ is called {\it trivial}. A trivial repeat insertion into $w$ is also a trivial return insertion, so we focus on nontrivial insertions.
As a convention, if $k=\ell$ we set
$w \star\mathcal{I}(\nu,k,k) =z_1 uu' z_2z_3$ as shown in $w_4$ in Example~\ref{insert_ex}.

\begin{remark}\label{lem_equal-ins}
Suppose $w\star \mc I_1(\nu_1,k_1,\ell_1)\sim w\star  \mc I_2(\nu_2,k_2,\ell_2)$, then $\nu_1=\nu_2=\nu$, and because the inserted word has distinct symbols from $w$, by definition we have that $u_1u_1'$ inserted by $\mc I_1$ and $u_2 u_2'$ by $\mc I_2$ are such that $u_1=u_2=u$  (and $u_1', u_2'\in \{u,u^R\}$).  
For all $w'\sim w$ we also have that 
$w'\star \mc I_1(\nu,k_1,\ell_1)\sim w'\star  \mc I_2(\nu,k_2,\ell_2)$. This follows from the fact that if $f(w\star \mc I_1(\nu,k_1,\ell_1)) = w\star  \mc I_2(\nu,k_2,\ell_2)$ and $g(w')=w$ for equivalence maps $f$ and $g$, 
then $f(g(w')\star \mc I_1(\nu,k_1,\ell_1))=g(w')\star  \mc I_2(\nu,k_2,\ell_2)$.
\end{remark}

Considering the above remark, we have the following definition for equality of two insertions.

\begin{definition}\label{j_equal}
We say that two nontrivial insertions $\mc I_1(\nu,k_1,\ell_1)$ and  $\mc I_2(\nu,k_2,\ell_2)$ are {\em equal  for a DOW $w$} if 
$w\star \mc I_1(\nu,k_1,\ell_1)\sim w\star  \mc I_2(\nu,k_2,\ell_2)$ and $(k_1,\ell_1)=(k_2,\ell_2)$.
\end{definition}

It becomes a natural question to consider the situations when two distinct (unequal) nontrivial insertions into $w$ 
yield equivalent words. 

The following three results are used repeatedly throughout Section~\ref{s_equiv}.
The first generalizes a well known lemma by Lyndon and Sch\"utzenberger \cite{L-S}; it describes the structures of two equivalent words where a suffix of one word is a prefix of the other.

\begin{lemma}\label{lem_L-S}
Let $s\in\Sigma^+$ and $t,z\in\Sigma^\ast$ such that $f(sz)=zt$ for an equivalence map $f$. 
Then, $s=s_1s_2$ with $z=f(s)\cdots f^{h-1}(s)f^h(s_1)$, and $t=f^h(s_2)f^{h+1}(s_1)$ for $h=\left\lceil|z|/|s|\right\rceil$.
\end{lemma}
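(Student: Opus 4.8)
The plan is to iterate the basic relation $f(sz) = zt$ so that the periodic structure of $z$ emerges, and then peel off one copy of $s$ at a time. First I would observe that the hypothesis says $zt$ begins with the block $f(s)$, i.e. $f(s)$ is a prefix of $zt$. If $|z| \ge |s|$, then $f(s)$ is actually a prefix of $z$, so write $z = f(s) z^{(1)}$; substituting back into $f(sz) = zt$ and cancelling the common prefix $f(s)$ (using that $f$ is an equivalence map, so $f(f(s) z^{(1)}) = f(s) f(z^{(1)})$, hence after cancellation $f(s z^{(1)}) = z^{(1)} t$ via the same $t$, up to applying $f$ — here one should be a little careful and track that each iteration introduces one more application of $f$). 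Repeating this as long as the remaining first factor has length $\ge |s|$ produces, after $h-1$ steps, $z = f(s) f^2(s) \cdots f^{h-1}(s) z^{(h-1)}$ with $0 \le |z^{(h-1)}| < |s|$ and a reduced relation of the form $f^{?}(s z^{(h-1)}) = z^{(h-1)} t'$ for the appropriate shifted word.

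Next I would handle the final step, where the leftover prefix $z^{(h-1)}$ is shorter than $s$. Now $f$ applied to (a suitable power-shifted) $s$ has $z^{(h-1)}$ as a prefix, so split $s = s_1 s_2$ with $|s_1| = |z^{(h-1)}|$; matching the first $|s_1|$ symbols identifies $z^{(h-1)}$ with $f^h(s_1)$ (tracking the exponent $h = \lceil |z|/|s|\rceil$ that has accumulated), giving $z = f(s)\cdots f^{h-1}(s) f^h(s_1)$. What remains of the equation on both sides after removing this common prefix then reads $f^{h}(s_2) f^{h+1}(s_1)$ on the one side matching $t$ on the other, yielding $t = f^h(s_2) f^{h+1}(s_1)$. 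I would also dispose of the degenerate sub-cases: if $|z|$ is an exact multiple of $|s|$ then $s_1 = \epsilon$, $s_2 = s$, and the formulas still hold; if $|z| < |s|$ then $h = 1$ and the argument is just the single final step.

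The bookkeeping of which power of $f$ is attached to which factor is the main obstacle: each time we cancel a prefix of length $|s|$ we both shorten $z$ and conjugate the surviving relation by one more $f$, and it is easy to be off by one in the exponent. The cleanest way to control this is to prove by induction on $h$ the statement ``if $|z| \le h|s|$ and $f(sz) = zt$, then $z$ and $t$ have the claimed form with this $h$,'' so that the inductive step is exactly one prefix-cancellation and the exponent increment is forced by the induction hypothesis. The special case $s_1 = \epsilon$ (equivalently $t = f^h(s)$, $z = f(s) \cdots f^h(s)$) recovers the classical Lyndon–Schützenberger statement and serves as a sanity check that the exponents are correct.
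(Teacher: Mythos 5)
Your overall strategy --- observe that $f(s)$ is a prefix of $z$ when $|z|\ge|s|$, peel off one block at a time while tracking one extra application of $f$ per step, and split $s=s_1s_2$ at the final boundary --- is exactly the paper's argument, just organized as an explicit induction on the number of peeled blocks rather than read off a figure; that packaging is a reasonable way to control the exponents, and the cases $z=\epsilon$ and $|z|<|s|$ are handled the same way the paper handles them.

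However, your treatment of the sub-case where $|z|$ is an exact multiple of $|s|$ is off by one and contradicts the stated value $h=\left\lceil|z|/|s|\right\rceil$. If $|z|=m|s|$ then $h=m$, and taking $s_1=\epsilon$, $s_2=s$ makes the formula $z=f(s)\cdots f^{h-1}(s)f^h(s_1)$ produce a word of length $(h-1)|s|=(m-1)|s|$, one full block short of $|z|$. The correct convention --- the one the paper enforces by setting $|s_1|=|z|-(h-1)|s|$, which always lies in $(0,|s|]$, so $s_1\neq\epsilon$ --- is $s_1=s$, $s_2=\epsilon$, giving $z=f(s)\cdots f^{h-1}(s)f^h(s)$ and $t=f^h(s_2)f^{h+1}(s_1)=f^{h+1}(s)$. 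Your ``sanity check'' exhibits the same slip: it asserts $t=f^h(s)$, but already for $z=f(s)$ (so $h=1$) the equation $f(sz)=f(s)f^2(s)=zt$ forces $t=f^2(s)$, not $f(s)$. This is worth fixing rather than waving through, because the later applications of the lemma (e.g.\ the arguments showing $|x_h|=|u|$ in Propositions 3.1 and 3.2) depend on the final block $f^h(s_1)$ being nonempty. Everything else in your outline is sound.
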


\begin{figure}[h!]
  \begin{minipage}{.3\textwidth}
  \centering
  \bf{(a)}\includegraphics[scale=.9]{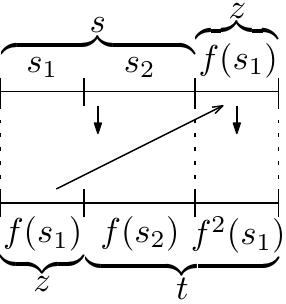}
  \end{minipage}
  \hfill
  \begin{minipage}{.6\textwidth}
  \centering
  \bf{(b)}\includegraphics[scale=.9]{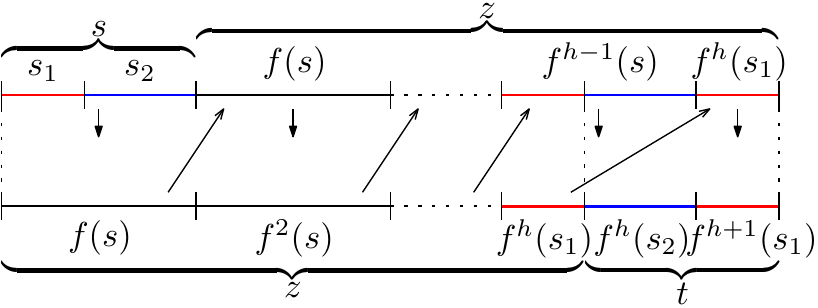}
  \end{minipage}
  \caption{Representation of $sz\sim zt$ when \textbf{(a)} $0<|z|<|s|$, or \textbf{(b)} $|z|\geq |s|$.}
  \label{L-S}
\end{figure}

\begin{proof} If $z=\epsilon$, then $f(t)=s$ and the statement holds with $h=0$ and $s=s_1$.
If $0<|z|<|s|$, then write $s=s_1s_2$ where $|s_1|=|z|$.
It follows that $f(s_1)=z$ because $f(s_1)$ is a prefix of $f(s)$, and $f(s_2)$ is a prefix of $t$ because $f(s_2)$ is a suffix of $f(s)$ and $f(s)\cap t\neq\emptyset$.
Then $sz\sim zt$ implies $f(z)=f^2(s_1)$ is a suffix of $t$, so $t=f(s_2)f^2(s_1)$ (Figure \ref{L-S}a).
If $|z|\geq|s|$, let $h=\left\lceil|z|/|s|\right\rceil$ and write $s=s_1s_2$ where $|s_1|=|z|-(h-1)|s|$.
Note that $s_1\neq\epsilon$.
Then $z=f(s)\cdots f^{h-1}(s)f^h(s_1)$ and $t=f^h(s_2)f^{h+1}(s_1)$ (Figure \ref{L-S}b).
\end{proof}

In Lemma~\ref{lem_L-S}, note that if $sz=zt$, then $f$ is the identity mapping, and so we arrive at the lemma from \cite{L-S}. In Section~\ref{s_equiv}, we use the lemma symmetrically when $s=t$, and $sz\sim zs$. An illustration of the situation described with Lemma~\ref{lem_L-S} can be seen within the prefixes of $w_1$ and $w_2$, as well as in the suffixes of $w_3$ and $w_4$.

\begin{lemma}\label{lem_short} Suppose $x,y_1,y_2\in \Sigma^+$ with $|y_1|=|y_2|$ and $f$ is an equivalence map.
Let $f(xy_1)=y_1x$ and either $f(x^Ry_2)=y_2x^R$ or  $f(y_2x)=xy_2$. 
Then either $|x|\le |y_1|=|y_2|$ or  $\Sigma[x]\cap \Sigma[y_2]\not=\emptyset$.
\end{lemma}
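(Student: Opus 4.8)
The plan is a direct argument with a trivial case split. If $|x|\le m$, where $m:=|y_1|=|y_2|$, there is nothing to prove, so assume $|x|>m$ and set out to exhibit a letter belonging to both $\Sigma[x]$ and $\Sigma[y_2]$. Write $n=|x|$ and $x=x_1x_2\cdots x_n$ with each $x_i\in\Sigma$; since $n\ge m+1$ we have $1\le n-m\le n$, so $x_{n-m}$ is a genuine letter of $x$.

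First I would read a letter identity off of $f(xy_1)=y_1x$ by comparing the two words (both of length $n+m$) position by position. Since $|y_1|=m$, for every index $j$ with $m<j\le n$ the $j$-th letter of $xy_1$ is $x_j$ while the $j$-th letter of $y_1x$ is $x_{j-m}$; applying $f$ gives $f(x_j)=x_{j-m}$ for all such $j$, and in particular
\[
f(x_n)=x_{n-m}.
\]
Next I would extract a companion identity from whichever $y_2$-relation is assumed. If $f(x^Ry_2)=y_2x^R$, comparing the $j$-th letters for $1\le j\le m$ gives $f\bigl((x^R)_j\bigr)=(y_2)_j$, that is $f(x_{n+1-j})=(y_2)_j$; taking $j=1$ yields $f(x_n)=(y_2)_1$. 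If instead $f(y_2x)=xy_2$, comparing the $j$-th letters for $n<j\le n+m$ gives $f(x_{j-m})=(y_2)_{j-n}$; taking $j=n+m$ yields $f(x_n)=(y_2)_m$. In either case $f(x_n)$ is a letter of $y_2$.

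Combining the displayed identity with the case analysis, $x_{n-m}=f(x_n)$ lies in $\Sigma[x]$ and also in $\Sigma[y_2]$, so $\Sigma[x]\cap\Sigma[y_2]\neq\emptyset$. I do not expect any genuine obstacle here: the argument is just careful bookkeeping of which block of each concatenation a given index falls into. The hypothesis $m<n$ is precisely what keeps the index $n-m$ inside the $x$-block of $y_1x$, so that $f(x_n)$ equals a letter of $x$; and $m\ge 1$ is what puts position $n$ of $x$ into the range handled by the second relation. (The same two identities could instead be obtained by invoking Lemma~\ref{lem_L-S} for $f(xy_1)=y_1x$ and for the chosen $y_2$-relation, but the direct comparison is shorter and self-contained.)
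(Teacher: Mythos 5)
Your proof is correct and follows essentially the same route as the paper's: both arguments track the image under $f$ of the last letter of $x$ (the paper writes $x=x'a$ where your $a$ is $x_n$), showing from $f(xy_1)=y_1x$ that this image lies in $\Sigma[x]$ and from either $y_2$-relation that it lies in $\Sigma[y_2]$. Your version just makes the positional bookkeeping explicit.
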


\begin{proof}
Let $x=x'a$ for $a\in \Sigma$ and let $|x|>|y_1|=|y_2|$. Then  $f(x'ay_1)=y_1x$  implies $f(a)\in \Sigma[x]$. In both cases when 
$f(ax'^Ry_2)=y_2x^R$  or $f(y_2x'a)=xy_2$ we have $f(a)\in \Sigma[y_2]$.
\end{proof}

The last lemma from \cite{Jonoska2017} details useful properties of repeat and return words. 

\begin{lemma}\label{lem_comp}
Let $w \in \Sigma_{DOW}$, let $xx'$ and $yy'$ be two repeat (return resp.) words in $w$, and let $u \in x \cap y$.
Then both $(x - u)(x' - u)$ ($(x - u)(x' - u^R)$ resp.) and $(y - u)(y' - u)$ ($(y - u)(y' - u^R)$ resp.) are repeat (return resp.) words in $w$.
Furthermore, if $xx'$ and $yy'$ are repeat and return words, respectively, in $w$, then $x \cap y = \{\epsilon\}$, $|x| = 1$, or $|y| = 1$. 
\end{lemma}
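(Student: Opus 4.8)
The plan is to prove the two assertions separately, with the double-occurrence property of $w$ doing the heavy lifting. \emph{For the first assertion}, discard the trivial case $u=\epsilon$ and assume $u\neq\epsilon$. Since $w$ is a DOW and $u$ is a factor of the single-occurrence word $x$, every symbol of $u$ occurs exactly twice in $w$, and both occurrences lie inside the two copies of $x$ witnessing $xx'$ — as $u$ in each copy in the repeat case, and as $u$ in the first copy and $u^R$ in the second copy in the return case; the same holds with $y$ in place of $x$. Because $u\neq\epsilon$, counting occurrences forces the occurrence of the block $u$ (or $u^R$) coming from a copy of $x$ to coincide in $w$ with the one coming from a copy of $y$ (in the return case a ``crossed'' pairing can also occur, but then $u=u^R$, a harmless degenerate case). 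Thus in $w$ the copies of $x$ and the copies of $y$ overlap along this common block; deleting it from each of the four copies and recording the factors that remain on either side exhibits $(x-u)(x'-u)$ and $(y-u)(y'-u)$ (with $u^R$ in the return case) as repeat, respectively return, words of $w$.

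\emph{For the ``furthermore'' assertion} I would argue by contradiction, using the equivalent formulation: if $\Sigma[x]\cap\Sigma[y]\neq\emptyset$ then $|x|=1$ or $|y|=1$. (This is equivalent to the stated form because $x,y$ are single-occurrence words, so $x\cap y=\{\epsilon\}$ exactly when $\Sigma[x]\cap\Sigma[y]=\emptyset$.) Here $xx'=xx$ is a repeat word and $yy'=yy^R$ a return word. Suppose $a\in\Sigma[x]\cap\Sigma[y]$ with $|x|,|y|\geq 2$. Then $a$ has a neighbor in the single-occurrence word $x$, so after replacing $w$ by $w^R$ if necessary — which carries repeat words to repeat words and return words to return words — we may assume $a$ is immediately followed in $x$ by a symbol $b\neq a$. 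Since $xx$ is a repeat word, each of the two occurrences of $a$ in $w$ is immediately followed by $b$; in particular the two occurrences of $b$ in $w$ are precisely the positions immediately to the right of the two occurrences of $a$.

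Now write $y=y_L\,a\,y_R$. Inside the forward copy of $y$ in $w$ the symbol after $a$ is the first symbol of $y_R$ (when $y_R\neq\epsilon$), and inside the reversed copy $y^R=y_R^R\,a\,y_L^R$ the symbol after $a$ is the last symbol of $y_L$ (when $y_L\neq\epsilon$); one occurrence of $a$ lies in each of these copies. As the symbol after either occurrence of $a$ in $w$ is $b$, we get: if $y_L$ and $y_R$ are both nonempty then $b\in\Sigma[y_L]\cap\Sigma[y_R]$, contradicting that $y$ is single-occurrence. So one of $y_L,y_R$ is empty; by symmetry assume $y_R=\epsilon$ (so $|y_L|\geq 1$), whence $ba$ is a suffix of $y$ and $ab$ is a prefix of $y^R$. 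Then the $b$ inside the $y$-block of $w$ is immediately to the left of an occurrence of $a$, while the $b$ inside the $y^R$-block is immediately to the right of an occurrence of $a$; combining this with the fact (from $xx$) that both occurrences of $b$ are immediately to the right of an occurrence of $a$, the $b$ in the $y$-block forces the two occurrences of $a$ to be exactly two positions apart with a $b$ between them, and the prefix $ab$ of $y^R$ then places a $b$ at that same in-between position. Hence the $b$ in the $y$-block equals the $b$ in the $y^R$-block, which is impossible because the $y$-block and the $y^R$-block of a return word are disjoint. This contradiction proves the assertion.

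The step I expect to be the real obstacle is the position bookkeeping for the first assertion: once the common block $u$ has been pinned down, one must still run through the ways in which the two $x$-copies can be nested inside, overlapped with, or disjoint from the two $y$-copies (tracking reversals in the return case) and verify in each configuration that $(x-u)(x'-u)$ and $(y-u)(y'-u)$ genuinely recur with the required gaps. The ``furthermore'' part, by comparison, is clean once one observes that a single shared symbol already suffices and that the reversal trick removes the boundary cases.
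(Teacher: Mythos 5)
First, a point of reference: the paper does not prove this lemma at all --- it is imported from \cite{Jonoska2017} (``The last lemma from [11] details useful properties\ldots'') --- so there is no in-paper proof to compare yours against, and your attempt has to stand on its own. Judged that way, your argument for the ``furthermore'' clause, which is the only part of the lemma this paper actually uses (in the proof of Theorem~\ref{thm_repret}), is correct: normalizing by $w\mapsto w^R$ so that $ab\sqsubseteq x$, using the DOW property to pin both occurrences of $b$ immediately to the right of the two occurrences of $a$, and playing that against the position of $a$ inside $y$ and $y^R$ is sound and pleasantly elementary. The one soft spot is ``by symmetry assume $y_R=\epsilon$'': the reversal symmetry has already been spent normalizing $x$, and $w\mapsto w^R$ fixes the return word (it sends $yy^R$ back to $yy^R$), so the case $y_L=\epsilon$ is not literally reduced to the case $y_R=\epsilon$. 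It is handled by the mirror-image argument in which the $b$ sitting immediately to the \emph{left} of an $a$ lives in the $y^R$-block rather than the $y$-block; that is a one-sentence fix, not a gap.

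The first clause is where the real problem lies, and it is not merely the unfinished bookkeeping you flag at the end. As transcribed in this paper, with ``repeat word in $w$'' meaning $w=z_1vz_2vz_3$ for a \emph{contiguous} factor $v$, the clause is false as stated: take $w=123123$, $x=123$, $y=23$, $u=2$; then $(x-u)(x-u)=13\cdot 13$, and $13$ is not a factor of $w$ at all. The intended statement (as in the source) is that $(x-u)(x'-u)$ and $(y-u)(y'-u)$ are repeat/return words in the word obtained from $w$ by deleting the two occurrences of $u$ (resp.\ of $u$ and $u^R$), and you must adopt that reading before anything can be verified. Once you do, the structural step you did establish --- that the $u$-block inside each $x$-copy coincides, position for position, with the $u$-block inside one of the $y$-copies, the crossed pairing forcing $|u|=1$ since a single-occurrence word with $u=u^R$ has length one --- already does all the work: deleting those two position-blocks from $w$ turns each $x$-copy into the contiguous word $x-u$ and each $y$-copy into $y-u$ or $(y-u)^R$, in the same left-to-right order with the same gaps, so no case analysis over nested, overlapping, or disjoint configurations is needed. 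In short: correct the statement and the ``real obstacle'' you anticipate evaporates; leave it as stated and no amount of bookkeeping will prove it.
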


%%%%%%%%%%%%%%%%%%%%%%%%%
\section{Insertions Yielding Equivalent Words}\label{s_equiv}
%%%%%%%%%%%%%%%%%%%%%%%%%

In this section, we fix $w\in\Sigma_{DOW}$ in ascending order of length $n$.
Let $\mathcal{I}_i(\nu_i,k_i,\ell_i)$ for $i=1,2$ be distinct insertions into $w$
such that  $w_i=w\star\mathcal{I}_i(\nu_i,k_i,\ell_i)$ are equivalent with $f(w_1)=w_2$ for an equivalence map $f$.
By Remark~\ref{lem_equal-ins} and Definition~\ref{j_equal}, $\nu_1=\nu_2=\nu$ and  $(k_1,\ell_1)\neq(k_2,\ell_2)$.
Without loss of generality, we assume that $k_1\leq k_2$. Because the inserted words have no symbols in common with $w$, we consider that $uu_1'$ is inserted with $\mc I_1$ and  $uu_2'$ is inserted with $\mc I_2$ ($u_1',u_2'\in \{u,u^R\}$).
If $\mathcal{I}_1,\mathcal{I}_2\in\Rep$ ($\mathcal{I}_1,\mathcal{I}_2\in\Ret$ resp.), then we consider that they both insert $uu'$.

Observe that $k_1\neq k_2$, because  if $k_1=k_2$, then $f(u)=u$, implying 
$\ell_1=\ell_2$, and hence the insertions are equal.
Up to symmetry, the indices $k_1,k_2,\ell_1,\ell_2$ can have the following possibilities.
\begin{itemize}
\setlength{\itemsep}{-3pt}
\item {\em Interleaving} insertions, $k_1<k_2\leq \ell_1<\ell_2$: $\mc I_2$ inserts $u$ at a location before $\mc I_1$ inserts $u_1'$.
\item {\em Nested} insertions, $k_1<k_2\leq \ell_2<\ell_1$: $\mc I_2$ inserts $u$ and $u_2'$ at locations before $\mc I_1$ inserts $u_1'$.
\item {\em Sequential} insertions, $k_1\leq\ell_1<k_2\leq\ell_2$: $\mc I_2$ inserts $u$ and $u_2'$ at locations after  $\mc I_1$ inserts $u_1'$. 
\end{itemize}

Further, without loss of generality, we can assume $k_1=1$ and $\ell=\max\{\ell_1,\ell_2\} = n+1$.
For the remainder of the section, we set $w=z_1z_2z_3$ where $|z_1|=k_2-1$ in the case of interleaving and nested 
insertions and $|z_1|= \ell_1-1$ in the case of sequential insertions. Also $|z_1z_2|$ equals  $\ell_1-1$, $\ell_2-1$, or  
$k_2-1$ for interleaving, nested, or sequential insertions, respectively.
Hence the four positions for the two insertions into $w$ are 1, $|z_1|+1$, $|z_1z_2|+1$, and $n+1$.
We consider that $w$ is in ascending order.

%%%%%%%%%%%%%%%%%%%%%%%%%%%%%%%%%%%%
\subsection{Interleaving and Nested Insertions} \label{ss_A+B}
%%%%%%%%%%%%%%%%%%%%%%%%%%%%%%%%%%%%
Let $t\in\Sigma_{DOW}$ be in ascending order.
We say that $t$ is an {\em interleaving sequence of return words} if there exist integers $h,\nu\geq 1$ such that $t=x_1x_2\cdots x_hx_1^Rx_2^R\cdots x_h^R$, where $|x_i|=\nu$ for all $i$; in this case we write $t=\text{Int}(h,\nu)$.
Clearly, $x_i\in\Sigma_{SOW}$ for all $i$.
Note that if $\nu=1$, then $\text{Int}(h,1)$ is a repeat word of size $h\geq 1$.

In Proposition~\ref{prop_caseA}, we consider two interleaving insertions into a DOW $w$ which yield equivalent words.
To describe the structure of $w$, we track the image of the inserted word $u$ in the resulting words $w_1,w_2\in\Sigma_{DOW}$ by using the equivalence map as described by Lemmas \ref{lem_L-S} and \ref{lem_short}.
We also use this ``image tracking'' method when considering nested and sequential insertions.

\begin{proposition}\label{prop_caseA}
If  $\mc I_1$ and $\mc I_2$ are interleaving insertions, that is $k_1 < k_2 \leq \ell_1 < \ell_2$, 
 then $z_1,z_3\in\Sigma_{SOW}$ are such that 
\begin{enumerate}[label={(\arabic*)}]
\setlength{\itemsep}{-3pt}
\item If $\mathcal{I}_1,\mathcal{I}_2\in\Rep$, then $z_1z_3$ is a repeat word in $w$.
\item If $\mathcal{I}_1,\mathcal{I}_2\in\Ret$, then $z_1z_3\sim \text{Int}(h,\nu)$ where $h=(k_2-k_1)/\nu$ is a positive integer.
\end{enumerate}
\end{proposition}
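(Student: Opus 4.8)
We have $w = z_1 z_2 z_3$ in ascending order, and two interleaving insertions with positions $1 = k_1 < k_2 \le \ell_1 < \ell_2 = n+1$, where $|z_1| = k_2 - 1$ and $|z_1 z_2| = \ell_1 - 1$. So
$$w_1 = w \star \mc I_1(\nu, 1, \ell_1) = u\, z_1 z_2\, u_1'\, z_3, \qquad w_2 = w \star \mc I_2(\nu, k_2, n+1) = z_1\, u\, z_2 z_3\, u_2',$$
and $f(w_1) = w_2$ for an equivalence map $f$. The key observation is that $w_1$ begins with $u$ (length $\nu$) while $w_2$ begins with $z_1$ (length $k_2 - 1$), and symmetrically $w_1$ ends with $u_1' z_3$ while $w_2$ ends with $u_2'$; since $u$ consists of fresh symbols and $z_1, z_3$ consist of symbols of $w$, comparing the two expressions for $f(w_1) = w_2$ forces strong structure. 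I would carry out the proof by ``tracking the image of $u$'': locate where the symbols of $f(u)$ (resp. $f(u_1')$) sit inside $w_2$, and use Lemma~\ref{lem_L-S} to peel off the overlap.

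**Main line of argument.** Write $k_2 - k_1 = k_2 - 1 = |z_1| =: m$. From $f(u z_1 z_2 u_1' z_3) = z_1 u z_2 z_3 u_2'$: the prefix $f(u)$ of $w_2$ has length $\nu$ and consists of symbols not in $w$, hence not in $z_1$; so $f(u)$ cannot overlap $z_1$ except by being empty there — meaning $\nu \le m$ is not forced yet, but $f(u)$ must align with the block $z_1 u$ of $w_2$. Actually the cleanest route: since the only occurrences of fresh symbols in $w_2$ are inside the single displayed $u$ and inside $u_2'$, and in $w_1$ they are inside $u$ and $u_1'$, the map $f$ must send $\{$first $u$ of $w_1\} \cup \{u_1'\}$ onto $\{u$ of $w_2\} \cup \{u_2'\}$. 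Comparing positions: the first $u$ of $w_1$ occupies positions $[1,\nu]$; in $w_2$ the fresh block $u$ occupies $[m+1, m+\nu]$ and $u_2'$ occupies the last $\nu$ positions. This yields a relation of the form $f(u)\, z_1' = z_1\, u$ on the relevant prefixes (with $z_1'$ a prefix of $f(z_1 z_2)$), to which Lemma~\ref{lem_L-S} applies with $s = u$, $z = z_1$, giving $z_1$ as a concatenation of iterated $f$-images of pieces of $u$, and in particular $\nu \mid m$, so $h := m/\nu = (k_2 - k_1)/\nu$ is a positive integer. Dually, applying Lemma~\ref{lem_L-S} to the suffixes (where $u_1' z_3$ of $w_1$ maps into $u_2'$ of $w_2$) pins down $z_3$ in terms of iterated $f$-images of pieces of $u_1'$. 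Splicing these two descriptions and using that all of $z_1 z_3$ lies in $w$ (no fresh symbols), one reads off that $z_1 z_3$, as a word, decomposes into $h$ blocks each of length $\nu$ whose pattern is governed by whether $u_1' = u$ (repeat case) or $u_1' = u^R$ (return case). In the repeat case this gives $z_1 z_3 = v v$ for a SOW $v$ of size... more precisely a repeat word in $w$; in the return case the blocks pair up in reversed order, giving $z_1 z_3 \sim x_1 \cdots x_h x_1^R \cdots x_h^R = \mathrm{Int}(h,\nu)$. One must also check $z_1, z_3 \in \Sigma_{SOW}$: this follows because each appears as an image of a SOW ($u$ or $u_1'$) under an (iterated) equivalence map, or more directly because a repeated symbol in $z_1$ would survive as a repeated symbol pattern incompatible with the block structure forced above; I would also invoke Lemma~\ref{lem_comp} to reconcile the two repeat (resp. return) words $u z_1 z_2 u_1'$-derived and the extracted $z_1 z_3$ and conclude.

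**Where the work is.** The routine part is the bookkeeping of positions and lengths; the substantive step is the correct application of Lemma~\ref{lem_L-S} on \emph{both} ends and then showing the two resulting descriptions of the fresh-symbol images are compatible and collapse to a single clean statement about $z_1 z_3$ — in particular getting the divisibility $\nu \mid (k_2 - k_1)$ and the precise $\mathrm{Int}(h,\nu)$ pattern in the return case, rather than merely ``some interleaving-ish structure.'' The main obstacle I anticipate is handling the case distinction on how $\nu$ compares to $|z_1|$ and to $|z_2|$ (i.e. whether $f(u)$ spills past $z_1$ into $z_2$, etc.), since Lemma~\ref{lem_L-S} already bundles the $0 < |z| < |s|$ and $|z| \ge |s|$ cases but the geometry of where $f(u_1')$ lands relative to the boundary between $z_3$ and the tail $u_2'$ needs care; I would organize this by first proving $\nu \mid (k_2-k_1)$, then inducting on $h$ (peeling off one length-$\nu$ block of $z_1$ at a time, each forced to be $f^{j}$ of a piece of $u$), which makes the $\mathrm{Int}(h,\nu)$ conclusion fall out cleanly. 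Finally, for part (1), once $\nu \mid (k_2 - k_1)$ and the block structure is in hand, the repeat case is the specialization $u_1' = u$, where the reversals disappear and the two halves of $z_1 z_3$ coincide up to $f$, yielding a genuine repeat word in $w$ after passing to ascending-order representatives.
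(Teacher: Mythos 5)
Your setup and overall strategy (align $f(uz_1)=z_1u$ and $f(u_1'z_3)=z_3u_2'$, then peel off blocks with Lemma~\ref{lem_L-S}) are the same as the paper's, but there is a concrete error at the center of your argument: you assert that Lemma~\ref{lem_L-S} gives ``in particular $\nu\mid m$'' for $m=k_2-k_1=|z_1|$, you propose to prove $\nu\mid(k_2-k_1)$ \emph{before} any case split, and you then rely on that divisibility for part (1). This is false in the repeat case. Lemma~\ref{lem_L-S} only yields $z_1=f(u)\cdots f^{h-1}(u)f^h(v_1)$ with $v_1$ a possibly \emph{proper} prefix of $u$, and the paper's Example~\ref{ex_a} has $\nu=2$ with $z_1=z_3=12345$ of length $5$; note also that the proposition itself claims integrality of $(k_2-k_1)/\nu$ only in case (2). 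In the repeat case no divisibility is needed (nor true): since $u_1'=u_2'=u$, the prefix relation $f(uz_1)=z_1u$ and the suffix relation $f(uz_3)=z_3u$ involve the same map $f$ and the same word $u$, and since $|z_1|=|z_3|$, Lemma~\ref{lem_L-S} gives both words the identical decomposition, partial last block and all, so $z_1=z_3$ and $z_1z_3$ is a repeat word outright.

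The divisibility is a feature of the return case only, and it does not come for free from Lemma~\ref{lem_L-S}; it needs two steps that your outline omits. First, one needs $|z_1|=|z_3|\geq\nu$ so that $z_1$ actually contains a full block $f(u)$; the paper gets this from Lemma~\ref{lem_short} applied to $f(uz_1)=z_1u$ and $f(u^Rz_3)=z_3u^R$, which you never invoke. Second, writing $z_3=x_1^R\cdots x_h^R$ with $x_h=f^h(v_1)$ and $u=v_1v_2$, one must rule out $|v_1|<|u|$: in that case $f^{h+1}(v_1^R)$ is a proper suffix of $u^R$, which forces $\Sigma[x_h]\cap\Sigma[u]\neq\emptyset$ inside $z_3$, contradicting $\Sigma[u]\cap\Sigma[w]=\emptyset$ from Definition~\ref{j_insert}. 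Without this contradiction argument your proposed induction ``peeling off one length-$\nu$ block at a time'' has no justification for the blocks being exactly length $\nu$, and with the divisibility misapplied to case (1) the argument proves a false statement there. The remainder of your outline (tracking fresh symbols, splicing the two decompositions, reading off $\mathrm{Int}(h,\nu)$, and the SOW claims for $z_1,z_3$) matches the paper and is sound.
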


\begin{proof} We have that $w_1=uz_1z_2u_1'z_3$ and $w_2=z_1uz_2z_3u_2'$. Because $uz_1\sim z_1u$ and $u$ is SOW, we have that $z_1$ is a SOW by Lemma~\ref{lem_L-S}.
Similarly, we have that $z_3$ is a SOW, and thus $|z_1|=|z_3|$ since $z_2$ is a DOW.
Moreover, $z_1=x_1x_2\cdots x_h$, where $f^i(u)=x_i$ for $1\leq i < h$ (if any such exist) and $x_h=f^h(v_1)$ where $v_1$
is a prefix of $u$. 
\\

(1) Suppose $\mathcal{I}_1,\mathcal{I}_2\in\Rep$. Then $u=u_1'=u_2'$ and the two equivalences $uz_1\sim z_1u$ and $uz_3\sim z_3u$ are with the same equivalence map. 
Since $z_1$ and $z_3$ are of the same length, by Lemma \ref{lem_L-S} they have the same structure,
i.e., $z_1=x_1x_2\cdots x_h=z_3$.\\

(2) Suppose $\mathcal{I}_1,\mathcal{I}_2\in\Ret$. Then $u_1'=u_2'=u^R$. By Lemma~\ref{lem_short}, $|z_1|=|z_3|\geq|u|$ because $f(uz_1)=z_1u$, $f(u^Rz_3)=z_3u^R$, and $\Sigma[w]\cap \Sigma[u]=\emptyset$. Finally, observe  that $|x_h|=|u|=\nu$.

\begin{figure}[h!]
  \centering
  \includegraphics[]{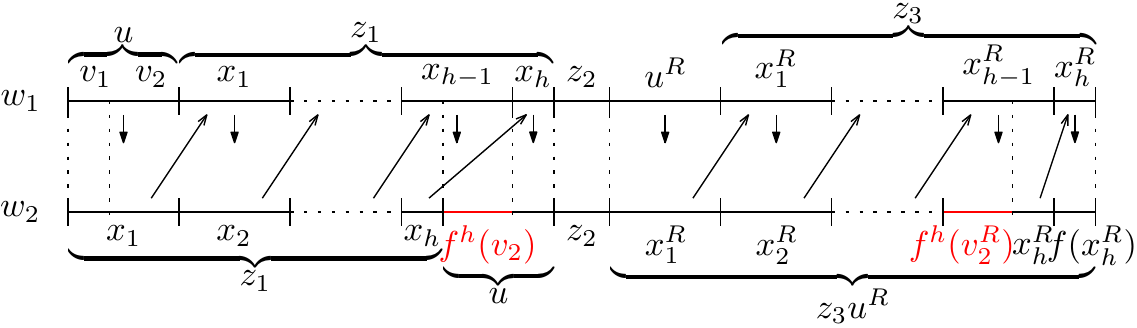}
  \caption{Representation of $w_1$ and $w_2$ when $\mathcal{I}_1,\mathcal{I}_2\in\Ret$ are such that $k_1<k_2\leq\ell_1<\ell_2$.}
  \label{A-RetRet} 
\end{figure}

 Since $u^Rz_3 \sim z_3u^R$, by Lemma~\ref{lem_L-S},
 $z_3=x_1^Rx_2^R\cdots x_{h-1}^Rx_h^R$ with $f^i(u)=x_i$ ($1\le i< h$) and $u=f^{h}(v_2)f^{h+1}(v_1)=v_1v_2$.
It follows that $f(x_{h-1}^Rx_h^R)=f^{h}(v_2^R)f^{h}(v_1^R)f^{h+1}(v_1^R) = f^{h}(v_2^R)x_h^Rf^{h+1}(v_1^R)$ is a suffix of $z_3u^R$ (Figure~\ref{A-RetRet}).
If $|x_h|=|v_1|<|u|$, $f^{h+1}(v_1^R)$ is a proper suffix of $u^R$, and so $\emptyset\neq\Sigma[x_h^R]\cap\Sigma[u]\subset
\Sigma[z_3]\cap \Sigma[u]$, contradictory to Definition~\ref{j_insert}.
The above implies that $h=|z_1|/|u|=(k_2-k_1)/\nu$ is a positive integer.
\end{proof}

\begin{example}\label{ex_a}
Let $w=12345677612345$ and consider the insertions $\rho(2,1,10)$ and $\rho(2,6,15)$ into $w$; also let $w'=123456652143$ with insertions $\tau(2,1,9)$ and $\tau(2,5,13)$ into $w'$.
\begin{align*}
w_1&=\uRed{89}123456776\uRed{89}12345\qquad\qquad 
w_1'=\uRed{78}12345665\uRed{87}2143\\
w_2&=12345\uRed{89}677612345\uRed{89}\qquad\qquad 
w_2'=1234\uRed{78}56652143\uRed{87}
\end{align*}
Note that $w_1\sim w_2$ with $z_1=z_3=12345$, and $w_1'\sim w_2'$ with $z_1z_3=12342143\sim\text{Int}(2,2)$.
\end{example}

Let $t\in\Sigma_{DOW}$ be in ascending order.
We say that $t$ is a {\em nested sequence of repeat words} if there exist integers $h,\nu\geq 1$ such that $t=x_1x_2\cdots x_hx_hx_{h-1}\cdots x_1$, where $|x_i|=\nu$ for all $i$; in this case we write $t=\text{Nes}(h,\nu)$.
Clearly, $x_i\in\Sigma_{SOW}$ for all $i$ and if $\nu=1$, then $\text{Nes}(h,1)$ is a return word of size $h\geq 1$.

\begin{proposition}\label{prop_caseB}
If  $\mc I_1$ and $\mc I_2$ are nested insertions, that is $k_1 < k_2 \leq \ell_2 < \ell_1$, 
 then $z_1,z_3\in\Sigma_{SOW}$ are such that 
 \begin{enumerate}[label={(\arabic*)}]
 \setlength{\itemsep}{-3pt}
\item If $\mathcal{I}_1,\mathcal{I}_2\in\Ret$, then $z_1z_3$ is a return word in $w$.
\item If $\mathcal{I}_1,\mathcal{I}_2\in\Rep$, then $z_1z_3\sim \text{Nes}(h,\nu)$ where $h=(k_2-k_1)/\nu$ is a positive integer.
\end{enumerate}
\end{proposition}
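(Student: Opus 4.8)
The plan is to mirror the proof of Proposition~\ref{prop_caseA}, exploiting the ``image tracking'' method and the fact that the nested case is, roughly, the reverse of the interleaving case. Writing $w=z_1z_2z_3$ with $|z_1|=k_2-1$ and $|z_1z_2|=\ell_2-1$, the two resulting words are $w_1=uz_1z_2z_3u_1'$ and $w_2=z_1uz_2u_2'z_3$, with $f(w_1)=w_2$ for an equivalence map $f$. First I would establish that $z_1,z_3\in\Sigma_{SOW}$ with $|z_1|=|z_3|$: from $f(w_1)=w_2$ the prefix $uz_1$ of $w_1$ maps to the prefix $z_1u$ of $w_2$, so $uz_1\sim z_1u$ and Lemma~\ref{lem_L-S} forces $z_1$ to be a SOW (since $u$ is a SOW and the two occurrences of $u$ in $uz_1\sim z_1u$ would otherwise overlap a repeated symbol); symmetrically, comparing the suffixes of $w_1$ and $w_2$ gives a relation $u_1'z_3\sim z_3 u_2'$ (after stripping $z_1uz_2$ on the left of $w_2$ and $uz_1z_2$ on the left of $w_1$, which have equal length because $|z_1|=|z_1|$ and $|z_2|=|z_2|$), so $z_3$ is a SOW; equality $|z_1|=|z_3|$ follows because $z_2$ is a DOW and $w$ is a DOW. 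As in Proposition~\ref{prop_caseA}, Lemma~\ref{lem_L-S} applied to $uz_1\sim z_1u$ yields $z_1=x_1x_2\cdots x_h$ with $f^i(u)=x_i$ for $1\le i<h$ and $x_h=f^h(v_1)$ for a prefix $v_1$ of $u$, and $|x_i|=\nu$ for $i<h$.

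For part (1), suppose $\mathcal{I}_1,\mathcal{I}_2\in\Ret$, so $u_1'=u_2'=u^R$. Then the left relation is $f(uz_1)=z_1u$ and the right (suffix) relation is $f(u^Rz_3)=z_3u^R$ — the exact hypotheses of Lemma~\ref{lem_short} — so either $|z_1|=|z_3|\ge|u|$ or $\Sigma[z_3]\cap\Sigma[u]\ne\emptyset$; the latter is impossible since the inserted symbols are disjoint from $\Sigma[w]$, hence $|z_1|\ge\nu$ and in particular $x_h$ has length $\nu$. Now I apply Lemma~\ref{lem_L-S} to the suffix relation $u^Rz_3\sim z_3u^R$: unlike the interleaving case, here $z_3$ is read in the same ``direction'' as $z_1$ relative to the insertion points, and I expect to obtain $z_3=x_h x_{h-1}\cdots x_1$ (reading the $x_i$ in reverse order but without individually reversing them), which is exactly the statement that $z_1z_3=x_1\cdots x_h x_h\cdots x_1$ is a return word in $w$ — note $z_1z_3\sqsubseteq z_1z_2z_3=w$ only after checking that $z_1uz_2u^Rz_3\sim w_1$ lets us recover $z_1z_3$ as a factor of $w$; more precisely $z_1z_3$ is a return word in $w$ because $w=z_1z_2z_3$ and $z_1z_3$ has the form $yy^R$ with $y=x_1\cdots x_h$. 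The boundary term $x_h$ needs the same care as in Proposition~\ref{prop_caseA}: I must rule out $|v_1|<\nu$ by observing it would put a symbol of $u$ into $z_3$, contradicting disjointness, which also forces $h=|z_1|/\nu=(k_2-k_1)/\nu$ to be a positive integer (this is not needed for part (1) but confirms the decomposition is clean).

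For part (2), suppose $\mathcal{I}_1,\mathcal{I}_2\in\Rep$, so $u_1'=u_2'=u$ and every occurrence of the inserted block is $u$ itself. Then the suffix relation becomes $f(uz_3)=z_3u$, with the \emph{same} equivalence map $f$ as the prefix relation $f(uz_1)=z_1u$. By Lemma~\ref{lem_L-S} both $z_1$ and $z_3$ decompose with respect to $f$ in the same way, but the geometry of nesting (the $u$ after $z_1$ is immediately followed, in $w_1$, by $z_2z_3u$, whereas in $w_2$ the second $u$ sits after $z_2$) means the decomposition of $z_3$ comes out \emph{reversed in block order}: $z_3=x_{h}x_{h-1}\cdots x_1$ with the same $x_i$. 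Hence $z_1z_3=x_1\cdots x_h x_h\cdots x_1\sim\text{Nes}(h,\nu)$, and $h=(k_2-k_1)/\nu$ is a positive integer by the same boundary argument as above. The main obstacle I anticipate is getting the \emph{direction} of the block decomposition of $z_3$ right in each case — i.e.\ carefully identifying which suffix of $w_2$ corresponds to which suffix of $w_1$ under $f$ after the common prefix is stripped, and tracking how the prefix/suffix roles of $u$ versus $u^R$ interact with the reversal in Lemma~\ref{lem_L-S}. A clean way to avoid sign errors is to use the symmetry $\left(w\star\mathcal{I}(\nu,k,\ell)\right)^R\sim w^R\star\mathcal{I}(\nu,|w|-\ell+2,|w|-k+2)$ noted in the text: reversing everything converts a pair of nested insertions into a pair of interleaving insertions (nesting $k_1<k_2\le\ell_2<\ell_1$ becomes $|w|-\ell_1+2<|w|-\ell_2+2\le|w|-k_2+2<|w|-k_1+2$, which is interleaving), and it swaps $\Rep\leftrightarrow\Rep$ but sends a return word $yy^R$ to $(yy^R)^R=yy^R$ again while sending $\text{Int}(h,\nu)$ to $\text{Nes}(h,\nu)$; so Proposition~\ref{prop_caseB} follows from Proposition~\ref{prop_caseA} by applying the reversal, reducing the whole argument to bookkeeping about how $\text{Int}$ and $\text{Nes}$, and return words, behave under reversal.
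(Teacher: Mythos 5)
Your overall skeleton (image tracking via Lemma~\ref{lem_L-S}, showing $z_1,z_3\in\Sigma_{SOW}$ with $|z_1|=|z_3|$, and decomposing $z_1=x_1\cdots x_h$) matches the paper, and your part (2) arrives at the correct form $z_3=x_hx_{h-1}\cdots x_1$, hence $z_1z_3\sim\text{Nes}(h,\nu)$. But two steps are genuinely wrong. First, your proposed ``clean'' fallback --- reducing the nested case to Proposition~\ref{prop_caseA} by reversal --- does not work. Reversal sends the index pair $(k_i,\ell_i)$ of $\mathcal I_i$ to $(n-\ell_i+2,\,n-k_i+2)$, so from $k_1<k_2\leq\ell_2<\ell_1$ you get $n-\ell_1+2<n-\ell_2+2\leq n-k_2+2<n-k_1+2$; reading off which index belongs to which insertion, this is again first index of $\mathcal I_1$ $<$ first index of $\mathcal I_2$ $\leq$ second index of $\mathcal I_2$ $<$ second index of $\mathcal I_1$ --- still the nested pattern. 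Reversal preserves the interleaving/nested type (it preserves whether the two inserted chords cross), so the whole shortcut collapses and you are left only with your direct argument.

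Second, in part (1) the direct argument is off. The suffix relation is $f(z_3u^R)=u^Rz_3$, not $f(u^Rz_3)=z_3u^R$ as you wrote; with the correct direction, Lemma~\ref{lem_short} does not apply (its hypotheses ask for $f(u^Rz_3)=z_3u^R$ or $f(z_3u)=uz_3$), and indeed its conclusion is false here: in Example~\ref{ex_b} one has $|z_1|=5$ and $\nu=2$, so $|x_h|<\nu$ and $h=(k_2-k_1)/\nu$ is not an integer in the $\Ret$ nested case --- the paper deliberately claims no divisibility in part (1). More importantly, applying Lemma~\ref{lem_L-S} symmetrically to $f(z_3u^R)=u^Rz_3$ yields $z_3=x_h^Rx_{h-1}^R\cdots x_1^R=z_1^R$, which is exactly what makes $z_1z_3=z_1z_1^R$ a return word; your $z_3=x_hx_{h-1}\cdots x_1$ gives $z_1z_3=x_1\cdots x_hx_h\cdots x_1$, which is $\text{Nes}(h,\nu)$ and is \emph{not} of the form $yy^R$ with $y=x_1\cdots x_h$ unless each $x_i$ has length $1$. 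In effect you have attached part (2)'s block structure to part (1)'s conclusion. Lemma~\ref{lem_short} and the boundary argument ruling out $|x_h|<\nu$ belong in part (2), where the relations $f(uz_1)=z_1u$ and $f(z_3u)=uz_3$ do satisfy its hypotheses and force $|z_1|\geq\nu$ and $\nu\mid|z_1|$.
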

\begin{proof}
We have that $w_1=uz_1z_2z_3u_1'$ and $w_2=z_1uz_2u_2'z_3$. 
As in the proof of  Proposition~\ref{prop_caseA}, it must be 
that $z_1$ and $z_3$ are SOW by Lemma~\ref{lem_L-S} because $u$ is SOW; hence $|z_1|=|z_3|$.
Also, $z_1=x_1x_2\cdots x_h$, where $f^i(u)=x_i$ for $1\leq i < h$ (if any such exist) and $x_h=f^h(v_1)$ where $v_1$
is a prefix of $u$. 
\\

(1) First, assume that $\mathcal{I}_1,\mathcal{I}_2\in\Ret$ and $u_1'=u_2'=u^R$. Because $u^Rz_3=f(z_3u^R)$, 
applying  Lemma \ref{lem_L-S} symmetrically   we have that $z_3=x_h^Rx_{h-1}^R\cdots x_1^R=z_1^R$.\\

\begin{figure}[h!]
  \centering
  \includegraphics[]{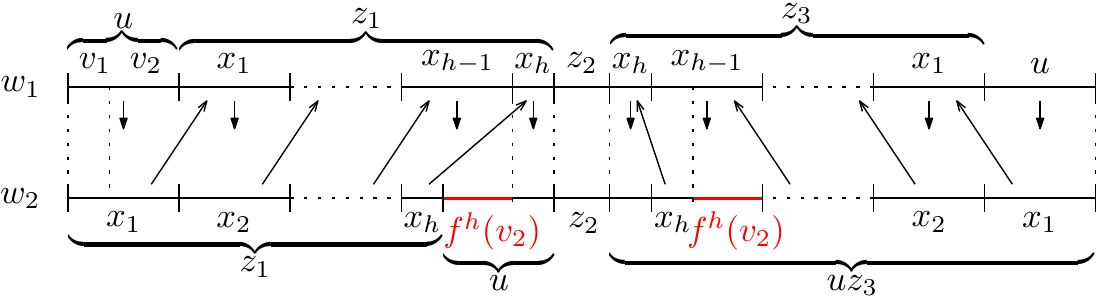}
  \caption{Representation of $w_1$ and $w_2$ when $\mathcal{I}_1,\mathcal{I}_2\in\Rep$ are such that $k_1<k_2\leq\ell_2<\ell_1$.}
  \label{B-RepRep} 
\end{figure}

(2) Suppose $\mathcal{I}_1,\mathcal{I}_2\in\Rep$ and $u=u_1'=u_2'$. Because $f(uz_1)=z_1u$ and $f(z_3u)=uz_3$, by 
Lemma~\ref{lem_short} we have $|u|\le |z_1|=|z_3|$. Similarly as in Proposition~\ref{prop_caseA}, 
we observe that $|x_h|=|u|$.
Applying  Lemma \ref{lem_L-S} symmetrically  (Figure \ref{B-RepRep}) $z_3=x_hx_{h-1}\cdots x_2x_1$.
It follows that $f(x_hx_{h-1})=f^{h+1}(v_1)f^{h}(v_1)f^{h}(v_2) = f^{h+1}(v_1)x_hf^{h}(v_2)$ is a prefix of $uz_3$
and if  $|x_h|=|v_1|<|u|$, then $f^{h+1}(v_1)$ is a proper prefix of $u$, and so $\Sigma[x_h]\cap\Sigma[u]\neq\emptyset$.
But then $\emptyset\neq\Sigma[x_h]\cap\Sigma[u]\subset
\Sigma[z_3]\cap \Sigma[u]$, contradictory to Definition~\ref{j_insert}.
This implies that $h=|z_1|/|u|=(k_2-k_1)/\nu$ is a positive integer.
\end{proof}

\begin{example}\label{ex_b}
Let $w=123456653412$ with the  insertions $\rho(2,1,13)$ and $\rho(2,5,9)$ into $w$; also let
$w'=12345677654321$ with the insertions $\tau(2,1,15)$ and $\tau(2,6,10)$ into $w'$.
\begin{align*}
w_1&=\uRed{78}123456653412\uRed{78}
\qquad\qquad w_1'=\uRed{89}12345677654321\uRed{98}\\
w_2&=1234\uRed{78}5665\uRed{78}3412
\qquad \qquad w_2'=12345\uRed{89}6776\uRed{98}54321
\end{align*}
Note that $w_1\sim w_2$ with  $z_1z_3=12343412\sim\text{Nes}(2,2)$ and $w_1'\sim w_2'$ with $z_1=12345$ and $z_3=z_1^R$.
\end{example}

\begin{proposition}\label{prop_AB-conv}
Let $\nu\in\mathbb{N}$, and suppose one of the following holds:
\vskip.5\baselineskip
\centerline{
\begin{tabular}{ll}
$(1)\quad t=12\cdots m 12\cdots m$ &
$(3)\quad t=\text{Nes}(h,\nu)$
\\
$(2)\quad t=\text{Int}(h,\nu)$ & 
$(4)\quad t=12 \cdots mm \cdots 21$
\end{tabular}
}
\vskip.5\baselineskip
\noindent for some $m\geq 1$ or $h\geq 1$, as appropriate.
Then there exist two distinct insertions, $\mathcal{I}_1(\nu,k_1,\ell_1)$ and $\mathcal{I}_2(\nu,k_2,\ell_2)$, such that $t\star\mathcal{I}_1(\nu,k_1,\ell_1) \sim t\star\mathcal{I}_2(\nu,k_2,\ell_2)$.
Moreover, the following table holds for this pair of insertions (based on the corresponding case):
\vskip.5\baselineskip
\centerline{
\begin{tabular}{c|cc}
& Interleaving & Nested\\
\hline
$\mathcal{I}_1,\mathcal{I}_2\in\Rep$ & (1) & (3) \\
$\mathcal{I}_1,\mathcal{I}_2\in\Ret$ & (2) & (4)
\end{tabular}
}
\vskip.5\baselineskip

\end{proposition}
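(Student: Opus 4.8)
The plan is to prove the proposition constructively: in each of the four cases we take $w=t$ and exhibit two insertions obtained by reverse-engineering the decompositions $w_1,w_2$ appearing in the proofs of Propositions~\ref{prop_caseA} and \ref{prop_caseB}, specialized to an empty middle factor $z_2=\epsilon$. As always, the inserted SOW $u$ is the word on the $\nu$ symbols immediately following the largest symbol of $t$, so every insertion written below is well defined.

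Concretely, the constructions are as follows. In case~(1), write $t=(12\cdots m)(12\cdots m)$ and put $\mathcal{I}_1=\rho(\nu,1,m+1)$ and $\mathcal{I}_2=\rho(\nu,m+1,2m+1)$; then $t\star\mathcal{I}_1=u(12\cdots m)u(12\cdots m)$ and $t\star\mathcal{I}_2=(12\cdots m)u(12\cdots m)u$, which are squares $vv$ with $|v|=m+\nu$. In case~(2), write $t=\text{Int}(h,\nu)=(x_1\cdots x_h)(x_1^R\cdots x_h^R)$ and put $\mathcal{I}_1=\tau(\nu,1,h\nu+1)$, $\mathcal{I}_2=\tau(\nu,h\nu+1,2h\nu+1)$; then $t\star\mathcal{I}_1=u\,x_1\cdots x_h\,u^R\,x_1^R\cdots x_h^R$ and $t\star\mathcal{I}_2=x_1\cdots x_h\,u\,x_1^R\cdots x_h^R\,u^R$, each of which, after relabelling the blocks, has the form $y_1\cdots y_{h+1}\,y_1^R\cdots y_{h+1}^R$ with $|y_i|=\nu$. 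In case~(3), write $t=\text{Nes}(h,\nu)=(x_1\cdots x_h)(x_h\cdots x_1)$ and put $\mathcal{I}_1=\rho(\nu,1,2h\nu+1)$, $\mathcal{I}_2=\rho(\nu,h\nu+1,h\nu+1)$; then $t\star\mathcal{I}_1=u\,x_1\cdots x_h\,x_h\cdots x_1\,u$ and $t\star\mathcal{I}_2=x_1\cdots x_h\,uu\,x_h\cdots x_1$, each of the form $y_1\cdots y_{h+1}\,y_{h+1}\cdots y_1$. Finally, in case~(4), write $t=(12\cdots m)(m\cdots21)=ss^R$ with $s=12\cdots m$ and put $\mathcal{I}_1=\tau(\nu,1,2m+1)$, $\mathcal{I}_2=\tau(\nu,m+1,m+1)$; then $t\star\mathcal{I}_1=u\,s\,s^R\,u^R=(us)(us)^R$ and $t\star\mathcal{I}_2=s\,uu^R\,s^R=(su)(su)^R$, which are return words of size $m+\nu$. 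In cases~(3) and (4) the computation of $t\star\mathcal{I}_2$ uses the convention $w\star\mathcal{I}(\nu,k,k)=z_1uu'z_2z_3$.

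The equivalence $t\star\mathcal{I}_1\sim t\star\mathcal{I}_2$ then follows in every case from a single observation: the two resulting words are DOWs of the same ``shape'' — a square, an interleaving sequence of return words, a nested sequence of repeat words, or a return word — with parameters ($\nu$ and the common size) that do not depend on which of $\mathcal{I}_1,\mathcal{I}_2$ was applied. An equivalence map carries a word of any such shape to another word of the same shape, and the shape together with its parameters fixes the equality-and-reversal pattern of positions, hence the unique ascending-order representative, e.g.\ $12\cdots p\,12\cdots p$ for a square and $\text{Int}(h+1,\nu)$ for an interleaving sequence; so $t\star\mathcal{I}_1$ and $t\star\mathcal{I}_2$ share an ascending representative and are therefore equivalent. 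It then remains to verify that $\mathcal{I}_1\neq\mathcal{I}_2$, which is immediate from comparing index pairs (using $m\ge1$ and $h\nu\ge1$), and to read off from the index inequalities that the pairs occupy the claimed cells of the table: cases~(1),(2) satisfy $k_1<k_2\le\ell_1<\ell_2$ and so are interleaving, cases~(3),(4) satisfy $k_1<k_2\le\ell_2<\ell_1$ and so are nested, while (1),(3) are pairs of repeat insertions and (2),(4) are pairs of return insertions.

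The parts that need care are the index bookkeeping — in particular, checking that the $k=\ell$ convention in cases~(3) and (4) yields exactly the ``$uu'$ inserted at the center'' word, and that the resulting indices land in the nested rather than the sequential regime — and the small auxiliary fact that two DOWs of the same canonical shape and size are equivalent, which I expect to dispose of in one sentence by uniqueness of the ascending representative. Beyond this, I do not anticipate a substantive obstacle: the statement is essentially the converse of Propositions~\ref{prop_caseA} and \ref{prop_caseB}, and the four constructions are forced by their proofs.
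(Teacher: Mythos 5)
Your proposal is correct and follows essentially the same route as the paper: the same two insertion pairs (at positions $1$, $n/2+1$, $n+1$ for interleaving and $1$, $n/2+1$, $n/2+1$, $n+1$ for nested, with the $k=\ell$ convention in cases (3)–(4)), with the equivalence justified by observing that both resulting words have the same canonical shape — square, $\text{Int}(h+1,\nu)$, $\text{Nes}(h+1,\nu)$, or return word — and hence the same ascending representative. The paper merely states the four explicit equivalences without the shape/representative commentary, but your verification and index bookkeeping are accurate and add nothing that conflicts with it.
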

\begin{proof}
Let $t\in\Sigma_{DOW}$ correspond to one of the given cases with $|t|=n$.
We define a pair of distinct insertions into $t$ as follows so that the table above holds:
\vskip.5\baselineskip
\begin{tabular}{ll}
$(1)$ - $(2)$ & $\mathcal{I}_1(\nu,1,\frac{n}{2}+1)$ and $\mathcal{I}_2(\nu,\frac{n}{2}+1,n+1)$\\
$(3)$ - $(4)$ & $\mathcal{I}_1(\nu,1,n+1)$ and $\mathcal{I}_2(\nu,\frac{n}{2}+1,\frac{n}{2}+1)$
\end{tabular}
\vskip.5\baselineskip
\noindent In all cases, let $uu'\in\Sigma_{DOW}$ be inserted into $t$.
Observe the following for each case:
\begin{enumerate}[label=(\arabic*)]
\setlength{\itemsep}{-3pt}
\item $(u12\cdots m)(u12\cdots m)\sim (12 \cdots m u)(12\cdots m u)$
\item $(u x_1 x_2 \cdots x_h)(u^R x_1^R x_2^R \cdots x_h^R) \sim (x_1 x_2 \cdots x_hu)(x_1^R x_2^R \cdots x_h^R u^R)$
\item $(u x_1 x_2 \cdots x_h)(x_h \cdots x_2 x_1 u) \sim (x_1 x_2 \cdots x_hu)(u x_h \cdots x_2 x_1)$
\item $(u12\cdots m)(m\cdots 21 u^R)\sim (12 \cdots m u)(u^Rm\cdots 21)$.
\end{enumerate}
\end{proof}

%%%%%%%%%%%%%%%%%%%%%%%%%%%%
\subsection{Sequential Insertions} \label{ss_C}
%%%%%%%%%%%%%%%%%%%%%%%%%%%%
In this section we consider the case when both indices of insertion $\mc I_1$ into $w$ precede both indices of the insertion $\mc I_2$ into $w$. First we observe the following lemma.
\begin{lemma}\label{lem_z3}
If $k_1 \leq \ell_1 < k_2 \leq \ell_2$, then $k_2-\ell_1\geq \nu$.
\end{lemma}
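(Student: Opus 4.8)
The plan is to argue by contradiction: suppose that $k_2 - \ell_1 < \nu$, i.e. the ``gap'' in $w$ between where $\mc I_1$ finishes inserting and where $\mc I_2$ begins is strictly shorter than the inserted word $u$. Recall that in the sequential case we have written $w = z_1 z_2 z_3$ with $|z_1| = \ell_1 - 1$ and $|z_1 z_2| = k_2 - 1$, so $|z_2| = k_2 - \ell_1$, and the two resulting words are $w_1 = z_1 u u_1' z_2 z_3$ and $w_2 = z_1 z_2 u u_2' z_3$ (using the convention $k_1 = 1$, $\ell = n+1$; the general case follows by Remark~\ref{lem_equal-ins}). The assumption is then exactly $|z_2| < \nu = |u|$. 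Since $f(w_1) = w_2$ and both words begin with $z_1$ followed by a block of length $2\nu$ made of new symbols ($uu_1'$ in $w_1$, and a mix of $z_2$ and a prefix of $uu_2'$ in $w_2$), the key observation is a symbol-clash: $f$ must send the symbols of $u$ (the first $\nu$ positions after $z_1$ in $w_1$) to positions in $w_2$ that, because $|z_2| < \nu$, overlap the $z_2$-block — forcing some symbol of $u$ to map into $\Sigma[z_2] \subseteq \Sigma[w]$, contradicting $\Sigma[u] \cap \Sigma[w] = \emptyset$.

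Concretely, I would compare $w_1$ and $w_2$ position by position starting at index $\ell_1 = |z_1| + 1$. In $w_1$, positions $\ell_1$ through $\ell_1 + \nu - 1$ are occupied by $u$; in $w_2$, positions $\ell_1$ through $\ell_1 + |z_2| - 1$ are occupied by $z_2$ and positions $\ell_1 + |z_2|$ onward by $uu_2'$. Since $f$ is a symbol-to-symbol map with $f(w_1) = w_2$, it carries the letter of $w_1$ at each position to the letter of $w_2$ at the same position. Because $|z_2| < \nu$, at least one position $p$ in the range $[\ell_1, \ell_1 + \nu - 1]$ satisfies $p \le \ell_1 + |z_2| - 1$, so $w_1$ has a letter of $u$ at position $p$ while $w_2$ has a letter of $z_2$ at position $p$. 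Hence $f$ maps a symbol of $\Sigma[u]$ to a symbol of $\Sigma[z_2] \subseteq \Sigma[w]$; since $w_1$ and $w_2$ share exactly the symbols of $w$ and the symbols of $u$, and $f$ restricts to a bijection, this is impossible — a symbol of $\Sigma[u]$ occurs in $w_1$ and its image must occur in $w_2$, but every occurrence of a $w$-symbol in $w_2$ has a $w$-symbol (not a $u$-symbol) as its $f$-preimage, and vice versa. Therefore $|z_2| \ge \nu$, i.e. $k_2 - \ell_1 \ge \nu$.

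I expect the only real subtlety to be bookkeeping: making precise that the symbol alphabets of $w_1$ and $w_2$ both equal $\Sigma[w] \sqcup \Sigma[u]$ (this is immediate from Definition~\ref{j_insert} and Remark~\ref{lem_equal-ins}, since $\nu_1 = \nu_2 = \nu$ and the inserted words use the same $\nu$ fresh symbols), and that an equivalence map between them must therefore either fix $\Sigma[w]$ and $\Sigma[u]$ setwise or swap them — and since $|{\Sigma[w]}| = n \neq \nu = |{\Sigma[u]}|$ in general one must be a bit careful, but in fact for the argument it suffices that $f$ cannot send a symbol appearing in the ``$u$-region'' of $w_1$ to a symbol appearing in the ``$w$-region'' of $w_2$ at the same index, which is what the position count above delivers directly. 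A clean way to phrase the whole thing is: the prefix of $w_1$ of length $\ell_1 + \nu - 1$ is $z_1 u$, whose suffix $u$ consists entirely of symbols not in $\Sigma[w]$; its $f$-image is the prefix of $w_2$ of the same length, namely $z_1 z_2 \cdot \pre(uu_2', \nu - |z_2|)$, and if $|z_2| < \nu$ this image contains the nonempty factor $z_2$, all of whose symbols lie in $\Sigma[w]$ — so $f$ sends some symbol of $u$ into $\Sigma[w]$, the desired contradiction.
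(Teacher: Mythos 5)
Your central step is not valid. You derive that $f$ must send some symbol of $u$ to a symbol of $\Sigma[z_2]\subseteq\Sigma[w]$ and declare this the desired contradiction, justifying it with the claim that every occurrence of a $w$-symbol in $w_2$ has a $w$-symbol as its $f$-preimage. But an equivalence map is only a bijection on $\Sigma$ satisfying $f(w_1)=w_2$; nothing forces it to preserve the partition of $\Sigma[w_1]=\Sigma[w]\cup\Sigma[u]$ into old and new symbols, and in fact throughout this paper $f$ routinely maps $u$ onto factors of $w$ --- that is precisely the ``image tracking'' on which Propositions~\ref{prop_caseA}, \ref{prop_caseB}, and \ref{prop_caseC2} are built. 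Example~\ref{ex_c} already refutes your claim: there $w_1=891892312453467567$ and $w_2=123124534675689789$, so $f(8)=1$ sends an inserted symbol to a symbol of $w$. Consequently your position count establishes only that $f$ maps part of $u$ onto part of $z_2$, which is no contradiction, and the lemma does not follow from your argument. A secondary but compounding error: with the normalization $k_1=1$, $\ell_1=|z_1|+1$, $k_2=|z_1z_2|+1$, $\ell_2=n+1$, the resulting words are $w_1=uz_1u_1'z_2z_3$ and $w_2=z_1z_2uz_3u_2'$, not $z_1uu_1'z_2z_3$ and $z_1z_2uu_2'z_3$; you have collapsed $k_1$ with $\ell_1$ and $k_2$ with $\ell_2$, which changes which blocks overlap.

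What is actually needed is a structural argument exploiting that $u\in\Sigma_{SOW}$ and that $w_2$ is a DOW, not merely that $\Sigma[u]\cap\Sigma[w]=\emptyset$. The paper's proof assumes $|z_2|=k_2-\ell_1<\nu$, observes that then a prefix of $u_1'$ in $w_1$ sits over a suffix of the copy of $u$ in $w_2$ while a prefix of $u$ in $w_1$ sits over $z_1$, and from these two facts forces $u_1'=u^R$ and likewise $u_2'=u^R$; a further case analysis on whether $f(u)$ lies inside $z_1z_2$ then shows that a prefix of $u$ would have to share symbols with a suffix of $u$, which is impossible for a single occurrence word unless $|z_1|=0$, and that case is excluded. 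Some argument of this kind, tracking where the two occurrences of the inserted word land and deriving a repeated symbol, is required; a one-step ``symbol-clash'' cannot work.
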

\begin{proof}
We have that $w_1=uz_1u_1'z_2z_3$ and $w_2=z_1z_2uz_3u_2'$ with $f(w_1)=w_2$. 

\begin{figure}[!h]
\begin{minipage}{.5\textwidth}
\centering
\bf{(a)}{\includegraphics[]{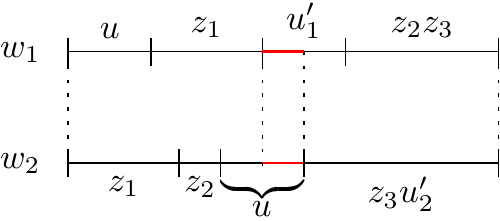}}
\end{minipage}
\hfill
\begin{minipage}{.5\textwidth}
\centering
\bf{(b)}{\includegraphics[]{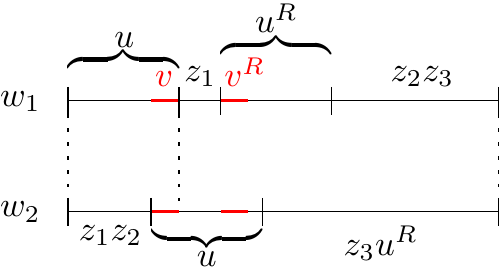}}
\end{minipage}
\caption{Representation of $w_1$ and $w_2$ when $k_1\leq \ell_1<k_2\leq\ell_2$ and $k_2-\ell_1<|u|$.}
\label{Claim-CaseC}
\end{figure}

If  $|u|>|z_2|=k_2-\ell_1$ then a prefix of $u_1'$ maps to suffix of $u$. Because a prefix of $u$ maps also to prefix of $z_1$, $u_1'\neq u$, i.e.,
$u_1'= u^R$.  Similarly, because a suffix of $z_2z_3$ maps to a suffix of $u_2'$, and a prefix of $u^R$ maps to a suffix of $u$, it can't be $u=u_2'$ (symbols of $w$ are disjoint with those in $u$). Hence, $u_2'=u^R$. 
We can exclude the situation in  Figure \ref{Claim-CaseC}a where $f(u)$ is a prefix of $z_1z_2$. Thus it must be that
 $|z_1z_2|<|u|$, and a suffix of $u$, call it $v$, maps to a prefix of $u$.
Because $\Sigma[u]\cap\Sigma[z_3]=\emptyset$, it must be that the prefix $v^R$ of $u^R$ maps into a suffix of $u$.
Hence the symbols in a prefix of $u$ equal symbols in a suffix of $u$, which can only happen if $|z_1|=0$ contrary to the assumption that $k_1\neq k_2$. 
 Therefore $|u|\leq|z_2|$.
\end{proof}

The structure of the words that allow distinct sequential insertions yielding equivalent words is described with the 
following recursive construction. 
\begin{definition}\label{tangled}
Let $m\geq 0$ and $\nu,j\geq 1$ be integers and let $s_0=12\cdots m 12 \cdots m$ (if $m=0$, $s_0=\epsilon$).
Define $s_j=s_{j-1}\star\rho(\nu,|s_{j-1}|-m+1,|s_{j-1}|+1)$ for $j=1,2,\dots$. The word $s_j$ is called a {\em $\rho$-tangled cord 
at level $j$}, and is denoted $s_j=T_\rho(\nu,m,j)$.

Let $\nu$ divide $m$ and $t_0 =\text{Int}(h,\nu)$ where $h=\frac{m}{\nu}$ (if $m=0$, $s_0=\epsilon$).
Define $t_j=t_{j-1}\star\tau(\nu,|s_{j-1}|-m+1,|s_{j-1}|+1)$ for $j=1,2,\dots$. The word $t_j$ is called a {\em $\tau$-tangled cord 
at level $j$}, and is denoted $t_j=T_\tau(\nu,m,j)$.
\end{definition}

In both $\rho$- and $\tau$-tangled cord words the insertion in the subsequent word is performed at the end of the previous word (inserting $u'$) and $m$ symbols from the last symbol (inserting $u$).

\begin{example}\label{ex_tc}
A level 3 $\rho$-tangled cord $w=T_\rho(2,1,3)$ is obtained in the following way
$$ 
s_0 = 11\qquad
s_1 = 1\uRed{23}1\uRed{23}\qquad 
s_2 = 12312\uRed{45}3\uRed{45}\qquad
s_3 = 123124534\uRed{67}5\uRed{67},
$$
and a level 2 $\tau$-tangled cord $w=T_\tau(2,4,2)$ is obtained with
$$
t_0 = 12342143\qquad
t_1= 1234\uRed{56}2143\uRed{65}\qquad
t_2= 12345621\uRed{78}4365\uRed{87}.
$$
\end{example}

Recall that $\text{Int}(h,1)$ for $h\geq 1$ is a repeat word of size $h$.
It follows that $T_\rho(1,m,j)=T_\tau(1,m,j)$ for $m\geq 0$ and $j\geq 1$.
Moreover, if $w=T_\sigma(\nu,0,j)$ for $\sigma\in\{\rho,\tau\}$ and $\nu,j\geq 1$, then
for $|v_i|=\nu$,
$$
w=\begin{cases}
v_1v_1 \cdots v_j v_j & \text{if } \sigma=\rho\\
v_1v_1^R \cdots v_j v_j^R & \text{if } \sigma=\tau.
\end{cases}
$$

As with interleaving and nested insertions, we will use the ``image tracking'' method given by Lemmas \ref{lem_L-S} and \ref{lem_short} when considering sequential insertions.
Since both indices $k_1\leq\ell_1$ of the first insertion precede the indices $k_2\leq\ell_2$ of the second, we must adapt this method to describe the structure of $w$ in between $\ell_1$ and $k_2$.
In Lemma~\ref{lem_caseC1}, we begin by considering the case in which $k_1=\ell_1=1$.

\begin{lemma}\label{lem_caseC1}
If $\mc I_1$ and $\mc I_2$ are sequential insertions such that $k_1 = \ell_1 < k_2 \leq \ell_2$, then $k_2=\ell_2$ and $k_2-\ell_1=2p\nu$  for some positive integer $p$. 
Moreover, if $\mathcal{I}_1,\mathcal{I}_2\in\Rep$ ($\mathcal{I}_1,\mathcal{I}_2\in\Ret$ resp.) then $z_1z_2z_3 \sim T_{\rho}(\nu,0,p)$ ($z_1z_2z_3 \sim T_{\tau}(\nu,0,p)$ resp.).
\end{lemma}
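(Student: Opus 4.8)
The hypothesis is $k_1=\ell_1=1$, so the first insertion places both copies $u$ and $u_1'$ at the very front: $w_1 = u u_1' z_1 z_2 z_3$, where I have set $w = z_1 z_2 z_3$ with the positions of the second insertion being $|z_1|+1 = k_2$ and $|z_1 z_2|+1 = \ell_2$. On the other side, $w_2 = z_1 z_2 u z_3 u_2'$. Since $f(w_1) = w_2$ and $\Sigma[u]\cap\Sigma[w]=\emptyset$, I would first track the image of the prefix $u u_1'$ of $w_1$: since $|u u_1'| = 2\nu$ and these symbols are fresh, $f(u u_1')$ must land on a factor of $w_2$ made of fresh symbols, and the only fresh symbols in $w_2$ are the copies $u$ (at position $k_2$) and $u_2'$ (at the end). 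So $f(uu_1')$ is a factor of $w_2$ consisting of the occurrence of $u$, possibly together with adjacent fresh symbols — but $u$ sits strictly inside $w_2$ (flanked by $z_2$ and $z_3$, both over $\Sigma[w]$) unless $z_2 = \epsilon$ or $z_3=\epsilon$. This forces $|uu_1'| = 2\nu \le \nu$ unless one of those is empty, a contradiction, so I expect to conclude $z_3 = \epsilon$, i.e. $\ell_2 = n+1$, hence $u_2'$ is a suffix and $f(uu_1')$ maps onto the block $u u_2'$ at the end of $w_2 = z_1 z_2 u u_2'$. Wait — that needs $u$ adjacent to $u_2'$, i.e. $z_3=\epsilon$, which is exactly $\ell_2 = k_2$; so simultaneously I get $k_2 = \ell_2$ and $\ell_2 = n+1$, i.e. $w_2 = z_1 z_2 u u_2'$ with $|z_1 z_2| = n$. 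This establishes the claim $k_2 = \ell_2$.

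**The recursive structure.** Now $w_1 = u u_1' w$ and $w_2 = w u u_2'$ (writing $w = z_1 z_2$), with $f(u u_1' w) = w u u_2'$. This is precisely the situation $f(sz) = zt$ of Lemma~\ref{lem_L-S} with $s = uu_1'$, $z = w$, $t = u u_2'$, together with the symmetric extra data that $s$ and $t$ have the same fresh alphabet. Using Lemma~\ref{lem_short} (with $x = u$, and the appropriate orientation of the second inserted copy) I would first rule out $|u| > |z_1|$ in the style of the Lemma~\ref{lem_z3}/Proposition~\ref{prop_caseA} arguments — if $u$ were too long, a suffix of $u$ would map to a prefix of $u$, forcing overlap of fresh symbols inside $w$, impossible — so $\nu \le |z_1|$ and the image-tracking is "clean." Applying Lemma~\ref{lem_L-S} to $f(uu_1'\, w) = w\, uu_2'$: $w$ decomposes as $f(uu_1')\,f^2(uu_1')\cdots$, truncated appropriately, so $|w|$ is a multiple of $|uu_1'| = 2\nu$, giving $k_2 - \ell_1 = |w| = 2p\nu$ for a positive integer $p$ (positivity because the insertions are distinct, so $z_1\neq\epsilon$ forces $p\ge 1$). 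I would then read off that $w$ is a concatenation of $p$ blocks each equivalent to $uu'$ (repeat case) or $uu^R$ (return case), which is exactly the description of $T_\rho(\nu,0,p)$ or $T_\tau(\nu,0,p)$ recorded right before the lemma statement; the $u_1' = u_2'$ consistency (both $= u$ in $\Rep$, both $= u^R$ in $\Ret$) follows because the map $f$ carrying $uu_1'$ to the front block of $w u u_2'$ is the same one iterated, matching orientations.

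**Where the work is.** The genuinely delicate step is the first one: pinning down that $\ell_2 = k_2 = n+1$, i.e. showing the second insertion is also "degenerate" and sits at the very end. This requires carefully excluding the intermediate configurations where $f$ of the fresh prefix $uu_1'$ straddles $u$ and part of $z_2$ or $z_3$ — one must use that $z_2, z_3$ are over $\Sigma[w]$ while $uu_1'$ is entirely fresh, so no partial overlap is possible, and then separately handle the orientation of $u_1'$ versus $u_2'$ (this is where Lemma~\ref{lem_short} does its job, mirroring its use in Lemma~\ref{lem_z3}). Once the geometry is reduced to $f(uu_1'\,w) = w\,uu_2'$, the rest is a direct, if bookkeeping-heavy, application of Lemmas~\ref{lem_L-S} and~\ref{lem_short} exactly as in Propositions~\ref{prop_caseA} and~\ref{prop_caseB}, together with matching the resulting block decomposition to Definition~\ref{tangled}.
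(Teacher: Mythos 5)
There is a genuine gap at the very first step, and it is the step the whole proposal hinges on. You claim that because the symbols of $uu_1'$ are fresh, ``$f(uu_1')$ must land on a factor of $w_2$ made of fresh symbols.'' This is false: $f$ is merely a bijection of $\Sigma$ with $f(w_1)=w_2$ positionwise, and nothing forces $f(\Sigma[u])\subseteq\Sigma[u]$. In fact, in exactly the situation of this lemma the opposite happens. Take $w=12123434$, $\mathcal{I}_1=\rho(2,1,1)$, $\mathcal{I}_2=\rho(2,9,9)$: then $w_1=5656\,12123434$, $w_2=12123434\,5656$, and the equivalence map sends the fresh symbols $5,6$ to the old symbols $1,2$. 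In general $f(uu_1')$ is simply the length-$2\nu$ prefix of $w_2$, which lies inside the old-symbol block preceding the second insertion whenever $k_2-1\geq 2\nu$; so your dichotomy (forcing $z_2=\epsilon$ or $z_3=\epsilon$) never gets off the ground, and the conclusion $k_2=\ell_2$ --- which is the main content of the lemma --- is not obtained. The correct mechanism is to iterate $f$: the orbit $u,f(u),f^2(u),\dots$ travels through consecutive factors of $z_2$ (old symbols) and must eventually return to $u$ since $f$ permutes a finite set of symbols. The paper first shows $|z_2|=2p\nu$ with $f^{p+1}(u)=u$ (if $|z_2|$ were not an even multiple of $\nu$, some iterate of an endpoint symbol of $u$ would land in $\Sigma[z_2]\cap\Sigma[u]$, or would produce a doubled symbol inside $u$), and only then deduces $z_3=\epsilon$ by applying Lemma~\ref{lem_L-S} with $s=uu'$, $z=z_2$, $t=u\pre(z_3u',\nu)$: the exact divisibility forces $s_1=s$, hence $t=f^{p+1}(uu')=uu'$ and $z_3=\epsilon$ because $\Sigma[u]\cap\Sigma[w]=\emptyset$.

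A secondary weakness: even granting the reduction to $f(uu_1'\,z_2)=z_2\,uu_2'$, Lemma~\ref{lem_L-S} alone gives $z_2=f(s)\cdots f^{h-1}(s)f^{h}(s_1)$ with a possibly proper nonempty last block $s_1$, so ``$|z_2|$ is a multiple of $2\nu$'' does not follow from it directly; excluding a partial last block is precisely the freshness/doubled-symbol argument sketched above, which you gesture at via Lemma~\ref{lem_short} but do not carry out. Your final identification of the block decomposition with $T_\rho(\nu,0,p)$ or $T_\tau(\nu,0,p)$ is fine once the preceding facts are in place, but as written the proof does not establish either $k_2=\ell_2$ or the divisibility $k_2-\ell_1=2p\nu$.
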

\begin{figure}[!h]
\subfloat{}{\includegraphics[]{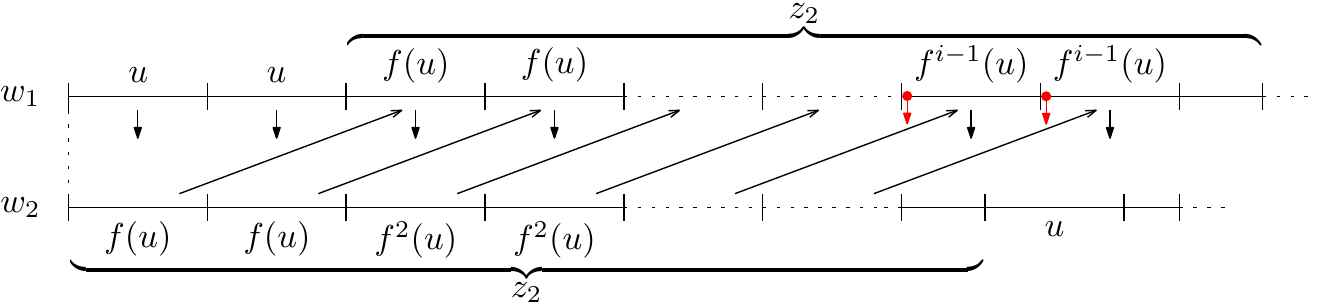}}

\vspace{3mm}

\subfloat{}{\includegraphics[]{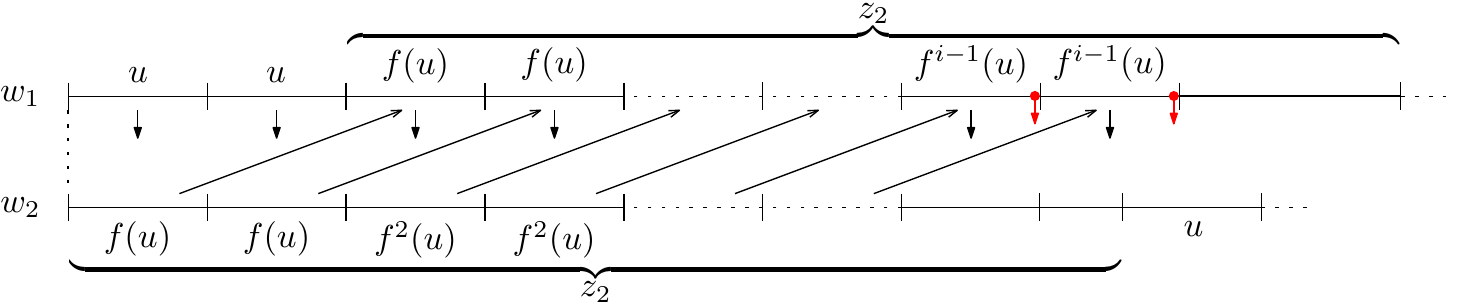}}
\caption{Representation of $w_1$ and $w_2$ when $|z_1|=0$ and $|z_2|$ is not an even multiple of $|u|$.} 
\label{C-noz2} 
\end{figure}
\begin{proof}
Suppose that $k_1 = \ell_1 < k_2 \leq \ell_2$. The situation is such that $w_1=uu'z_2z_3$ and $w_2=z_2uz_3u'$. Recall that $|z_2|\geq|u|=\nu$ by Lemma \ref{lem_z3}.
Assume that $|z_2|=k_2-\ell_1$ is not an even multiple of $|u|$, and let $u=b_1\cdots b_\nu$.
If $\mathcal{I}_1,\mathcal{I}_2\in\Rep$, then there exists a positive integer $i$ such that either $f^i(b_1)$
or $f^i(b_\nu)$ belongs to $\Sigma[z_2]\cap \Sigma[u]$ (see Figure \ref{C-noz2}) contradicting the fact that 
 $\Sigma[w]\cap\Sigma[u]=\emptyset$.
Similarly, if $\mathcal{I}_1,\mathcal{I}_2\in\Ret$, then there exists a positive integer $i$ such that $f^i(b_\nu)f^i(b_\nu)\sqsubseteq u$, or $f^i(b_1)\in\Sigma[z_2]\cap\Sigma[u]$. 
Then either $u\not\in\Sigma_{SOW}$ or $\Sigma[w]\cap\Sigma[u]\neq\emptyset$.
It follows that $k_2-\ell_1 = |z_2|=2p|u|=2p\nu$ for some positive integer $p$ such that $f^{p+1}(u)=u$.

We now apply Lemma~\ref{lem_L-S} with $s=uu'$, $z=z_2$, and $t=u\pre(z_3u',\nu)$.
Note that $|z_2|=2p\nu$ implies that $s=uu'=s_1$, and thus $u\pre(z_3u',\nu)=uu'$ (i.e., $z_3=\epsilon$) because $\Sigma[u]\cap\Sigma[w]=\emptyset$.
Thus $z_2=y_1y_1\cdots y_py_p$ ($z_2=y_1y_1^R\cdots y_py_p^R$ resp.) where $y_i=f^i(u)$ for $1\leq i\leq p$ in the case of two repeat (return resp.) insertions.
That is to say, $z_2$ is equivalent to $T_\rho(\nu,0,p)$ ($T_\tau(\nu,0,p)$ resp.).
\end{proof}

\begin{proposition}\label{prop_caseC2}
If $\mc I_1$ and $\mc I_2$ are sequential insertions such that $k_1 \le \ell_1 < k_2 \leq \ell_2$, then $k_2-\ell_1=2p\nu$  for some positive integer $p$ and $|z_1|=|z_3|=q$. 
Moreover, if $\mathcal{I}_1,\mathcal{I}_2\in\Rep$ then $z_1z_2z_3 \sim T_{\rho}(\nu,q,p)$, while if $\mathcal{I}_1,\mathcal{I}_2\in\Ret$ then $z_1z_2z_3 \sim T_{\tau}(\nu,q,p)$ where $\nu$ divides $q$.
\end{proposition}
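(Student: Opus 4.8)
The plan is to reduce the general sequential case $k_1 \le \ell_1 < k_2 \le \ell_2$ to the already-handled case $k_1 = \ell_1 < k_2 \le \ell_2$ of Lemma~\ref{lem_caseC1} by ``peeling off'' the prefix $z_1$ and suffix $z_3$. Write $w_1 = uz_1u_1'z_2z_3$ and $w_2 = z_1z_2uz_3u_2'$ with $f(w_1) = w_2$. First I would establish the basic structural facts about $z_1$ and $z_3$: since a prefix of $u$ in $w_1$ maps to a prefix of $z_1$ in $w_2$, the word $uz_1$ (in $w_1$) and $z_1u$-initial segment (in $w_2$) force $z_1 \in \Sigma_{SOW}$ by Lemma~\ref{lem_L-S} (as in Propositions~\ref{prop_caseA} and \ref{prop_caseB}), and symmetrically $z_3 \in \Sigma_{SOW}$ by tracking $u_2'$ at the end of $w_2$ back into $w_1$; since $z_2$ is a DOW this gives $|z_1| = |z_3| = q$. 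I would also record, via Lemma~\ref{lem_short}, that $u_1' = u_2' = u'$ with the same choice (repeat vs.\ return) throughout, consistent with the hypothesis $\mathcal{I}_1,\mathcal{I}_2 \in \Rep$ or both in $\Ret$.

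Next, the ``image tracking'': writing $u = b_1\cdots b_\nu$, the equivalence map $f$ sends $u$ into the prefix $z_1$ of $w_2$, so $z_1 = x_1x_2\cdots$ is built from iterated $f$-images $f^i(u)$ of $u$, and the length constraint $|z_1| = q$ determines how many whole images fit. The key point is that because $\Sigma[u] \cap \Sigma[w] = \emptyset$, no $f$-image of a symbol of $u$ may land inside $z_2$ or $z_3$ until the ``cycle closes''; combined with Lemma~\ref{lem_z3} giving $|z_2| = k_2 - \ell_1 \ge \nu$, this forces $\nu \mid q$ and $z_1 = f(u)f^2(u)\cdots f^{q/\nu}(u)$ exactly, with no partial block. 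I would then argue that the residual equivalence obtained by deleting the $z_1$ prefix from $w_2$ and the corresponding initial segment from $w_1$ — equivalently, applying the composite map $g = f \circ (\text{shift})$ — is exactly an instance of the situation in Lemma~\ref{lem_caseC1} applied to the word $w' = z_2 z_3$ (or a cyclic/conjugate rearrangement thereof), with $u$ replaced by its $f^{q/\nu}$-image. Invoking Lemma~\ref{lem_caseC1} then yields $k_2 - \ell_1 = 2p\nu$ and that the central portion is $T_\rho(\nu,0,p)$ (resp.\ $T_\tau(\nu,0,p)$); re-attaching the $q$-symbol SOW blocks $z_1$ and $z_3$ (which by the tracking are $f$-images fitting together as the outer ``$12\cdots m\,12\cdots m$'' of a $\rho$-tangled cord, resp.\ the $\text{Int}$ part of a $\tau$-tangled cord) identifies $z_1z_2z_3$ with $T_\rho(\nu,q,p)$ (resp.\ $T_\tau(\nu,q,p)$), and forces $\nu \mid q$ in the return case.

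The main obstacle I anticipate is the bookkeeping needed to show that the $z_1$-prefix of $w_2$ and the $z_3$-suffix genuinely ``factor out'' compatibly with the recursive structure of the tangled cord — in particular, verifying that the same integer $p$ governs both the central repeat/return blocks and that the outer SOW blocks $z_1$, $z_3$ attach in the precise pattern demanded by Definition~\ref{tangled}, rather than merely having the right lengths. This requires carefully chasing the equivalence map $f$ through all four boundary regions ($u \mid z_1$, $z_1u_1' \mid z_2$, $z_2 u \mid z_3$, $z_3 \mid u_2'$) simultaneously and checking that the disjointness $\Sigma[u] \cap \Sigma[w] = \emptyset$ eliminates every ``misaligned'' configuration, much as in the proof of Lemma~\ref{lem_caseC1} but now with the extra $z_1$/$z_3$ padding. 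A clean way to organize this is to prove first that $f^{q/\nu}$ maps $u$ to the first block of $z_2$ and then induct, peeling one $\nu$-block at a time, so that the tangled-cord recursion in Definition~\ref{tangled} is mirrored step-for-step by the iteration of $f$.
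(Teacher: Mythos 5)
Your proposal contains a genuine structural error that the reduction strategy cannot survive. You claim that the disjointness $\Sigma[u]\cap\Sigma[w]=\emptyset$ forces $\nu\mid q$ and $z_1=f(u)f^2(u)\cdots f^{q/\nu}(u)$ ``exactly, with no partial block.'' This is false in the repeat case: the proposition itself attaches the condition ``$\nu$ divides $q$'' only to the return case, and Example~\ref{ex_c} exhibits $w=T_\rho(2,1,3)$ with insertions $\rho(2,1,2)$ and $\rho(2,14,15)$, where $q=\ell_1-k_1=1$ and $\nu=2$. Lemma~\ref{lem_L-S} in general leaves a partial block $x_h=f^h(v_1)$ with $u=v_1v_2$, and the paper's proof must carry this split $v=f^h(v_2)$ through the whole argument; for repeat insertions it survives (giving arbitrary $q$), whereas for return insertions a separate argument using the DOW property of $w_2$ shows $v=\epsilon$, which is precisely where $\nu\mid q$ comes from. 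Your proposal never distinguishes these cases, so it cannot produce the correct statement.

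The second, related problem is that ``peeling off'' $z_1$ and $z_3$ to reduce to Lemma~\ref{lem_caseC1} applied to $z_2z_3$ does not work, because the symbols of $z_1$ do not factor out: since $w$ is a DOW and $z_1$ is a SOW, every symbol of $z_1$ recurs inside $z_2z_3$, and the image tracking shows these recurrences together with the $f$-images of $z_1$'s blocks are interleaved with the $y_i$ throughout $z_2$ (e.g., in the case $0<|z_1|<|u|$ one gets $z_2=y_1x_1v\,y_2f(x_1)f(v)\cdots y_pf^{p-1}(x_1)f^{p-1}(v)$, not $y_1y_1\cdots y_py_p$). Correspondingly, $T_\sigma(\nu,m,j)$ for $m>0$ is not $T_\sigma(\nu,0,j)$ with outer SOW blocks attached at the ends: Definition~\ref{tangled} inserts each new pair $m$ symbols back from the end of the previous level, weaving the $z_1$-derived symbols through the interior (compare $s_3=123124534675\,67$ in Example~\ref{ex_tc}). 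The paper therefore does not reduce to the $m=0$ case; it redoes the image tracking with the cases $0<|z_1|<|u|$ and $|z_1|\geq|u|$ (the latter subdivided by $p<h$ versus $p\geq h$ with $h=\lceil|z_1|/\nu\rceil$), and only afterwards matches the resulting factorization against the recursion $s_j=s_{j-1}\star\rho(\nu,|s_{j-1}|-m+1,|s_{j-1}|+1)$. You correctly flagged the bookkeeping as the main obstacle, but the picture you propose for resolving it (central core plus end padding) is the wrong one, so the obstacle is not merely technical.
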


\begin{proof}
The situation $k_{1} = \ell_{1}$
follows from Lemma \ref{lem_caseC1}, so we assume that $k_1 < \ell_1 < k_2 \leq \ell_2$ 
with $w_1=uz_1u'z_2z_3$, $w_2=z_1z_2uz_3u'$, and recall that $|z_2|\geq|u|=\nu$ by Lemma \ref{lem_z3}. 

(1) Suppose $\mathcal{I}_1,\mathcal{I}_2\in\Rep$; we consider two cases: (1.a) $0<|z_1|<|u|$ and (1.b) $|z_1|\geq |u|$.

(1.a) Let $0<|z_1|<|u|$, write $x_1=z_1$,
and let $y_1$ be the prefix of $z_2$ such that $|y_1|=|u|$; note that $y_1$ exists by Lemma~\ref{lem_z3} and that $uz_1\sim z_1y_1$.
Lemma \ref{lem_L-S} implies $u_1=v_1v_2$ with $v_1\neq\epsilon$, $z_1=f(v_1)=x_1$, and $y_1=f(v_2)f^2(v_1)=f(v_2)f(x_1)$.

Write $v=f(v_2)$ so that $y_1=vf(x_1)$.
Then, $f(uy_1)=x_1vf(y_1)=x_1vf(v)f^2(x_1)$.
Note that $x_1v\sqsubseteq z_2$, which implies that $y_1x_1v$ is a prefix of $z_2$ of length $2|u|$.
Moreover, $f(y_1)$ cannot have a proper factor in common with $u$, otherwise $f(x_1)\cap u\neq\emptyset$ contradicting the fact that $\Sigma[z_2]\cap \Sigma[u]=\emptyset$.
Thus, there are two possibilities: either $f(y_1)=u$ or $f(y_1)$ is a proper factor of $z_2$.
In the first situation,  $z_2=y_1x_1v=vf(x_1)x_1v$, and thus $|z_2|=2|u|=2\nu$, with 
$f(v)$  a prefix of $u$; 
thus, $z_3=f(x_1)$ and $|z_3|=|z_1|$.
Then $z_1z_2z_3=x_1y_1x_1vf(x_1)=x_1y_1x_1y_1$.
Observe that $z_1z_2z_3\sim T_\rho(\nu, \ell_1-k_1,1)$ and in particular $s_0\sim x_1x_1$ (where $s_0$ starts the induction of the tangled cord as in Definition~\ref{tangled}).

\begin{figure}[h!]
\centering
\includegraphics[scale=.75]{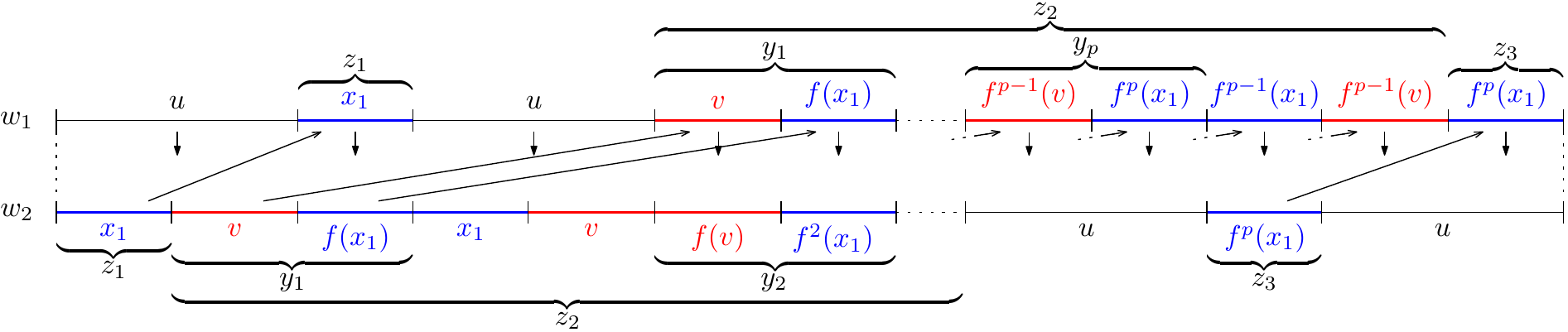}
\caption{Representation of $w_1$ and $w_2$ when $|z_1|<|u|$ and $f(y_1)$ is a proper factor of $z_2$. }\label{C-z1_short}
\end{figure}

\begin{comment}
\begin{sidewaysfigure}[]
  \centering
  \subfloat{\bf{(a)}}{\hspace*{-1.0cm}\includegraphics[]{C-z1-RepRep_shorter}}
  
    \vspace{10mm}

  \subfloat{\bf{(b)}}{\hspace*{-1.0cm}\includegraphics[]{C-z1-RepRep_longer-a}}
  
    \vspace{10mm}
  
  \subfloat{\bf{(c)}}{\hspace*{-1.1cm}\includegraphics[]{C-z1-RepRep_longer-b}}
  \caption{Representation of $w_1$ and $w_2$ when $\mathcal{I}_1,\mathcal{I}_2\in\Rep$ are such that $k_1 < \ell_1 < k_2 \leq \ell_2$, and \textbf{(a)} $0<|z_1|<|u_1|$, \textbf{(b)} $|z_1|\geq |u_1|$ and $p<h$, or \textbf{(c)} $|z_1|\geq |u_1|$ and $p\geq h$.}
  \label{C-z1-RepRep} 
\end{sidewaysfigure}
\end{comment}

In the second situation, we set  $y_2=f(y_1)=f(v)f^2(x_1)\sqsubseteq z_2$.
Similarly as above, we have that $y_1x_1vy_2f(x_1)f(v)$ is a prefix of $z_2$ of length $4|u|$, and either $f(y_2)=u$ or $f(y_2)$ is a proper factor of $z_2$. Inductively, 
 there is $p\geq 1$ such that $y_p=f^{p-1}(v)f^p(x_1)$ and $f(y_p)=u$ (see Figure \ref{C-z1_short}).
It follows that $$z_2=y_1x_1vy_2f(x_1)f(v)\cdots y_pf^{p-1}(x_1)f^{p-1}(v) \text{ with } p=\frac{k_2-\ell_1}{2\nu}.$$
Because $f^{p}(v)$ is a prefix of $u$, $z_3=f^p(x_1)$ and $|z_3|=|z_1|$.
As a consequence,
$$
z_1z_2z_3=x_1y_1x_1vy_2f(x_1)f(v)\cdots y_pf^{p-1}(x_1)f^{p-1}(v)f^p(x_1)=s_p
$$
for $p\geq 1$.
Observe that $z_1z_2z_3\sim T_\rho(\nu,\ell_1-k_1,p)$ because $y_i=f^{i-1}(v)f^i(x_1)$ for $1\leq i\leq p$.
In fact, taking $s_0 \sim x_1x_1$
we have 
$$
s_p= x_1y_1x_1vy_2f(x_1)f(v)\cdots y_pf^{p-1}(x_1)y_p\sim s_{p-1}\star \rho(\nu,|s_{p-1}|-|z_1|+1,|s_{p-1}|+1)\sim T_\rho(\nu,|z_1|,p).
$$

(1.b) Let $|z_1|\geq |u|=\nu$, and set as above $y_1$ to be the prefix of $z_2$ such that $|y_1|=|u|$.
Because $uz_1\sim z_1y_1$, by Lemma \ref{lem_L-S} we have that
$u=v_1v_2$ with $v_1\neq\epsilon$, 
$z_1=x_1x_2\cdots x_h$, where $f^i(u)=x_i$ for $1\leq i < h$ (if any such exist) with $x_h=f^h(v_1)$, and 
$y_1=f^h(v_2)f^{h+1}(v_1)=f^h(v_2)f(x_h)$.
Write $v=f^h(v_2)$ so that $y_1=vf(x_h)$; if $v_2=\epsilon$, then $v=\epsilon$ and $y_1=f(x_h)$ (see Figure \ref{C-z1_general}).
Similarly as in the case (1.a), using the same notation, it implies that there is a positive integer $p\geq 1$ such that $f(y_p)=u$.
\begin{figure}[h!]
\centering
\includegraphics[scale=.9]{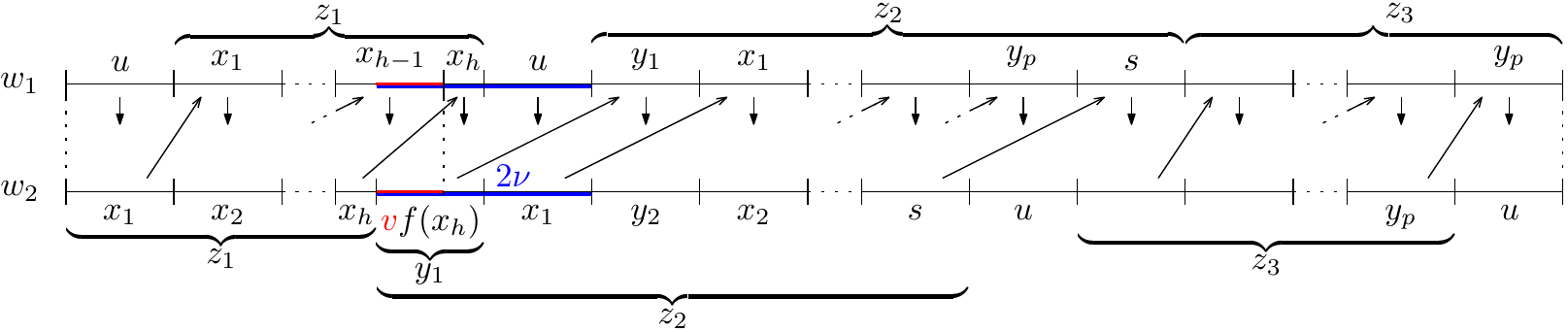}
\caption{Representation of $w_1$ and $w_2$ when $|z_1|\geq |u|$ and $f(y_1)$ is a proper factor of $z_2$.
If $p<h$, then $s=x_p$; otherwise $s=f^{p-h}(x_hv)$.}\label{C-z1_general}
\end{figure}

Suppose first $p<h$; then $z_2=y_1x_1\cdots y_px_p$, indicating $p=\frac{k_2-\ell_1}{2\nu}$ because $|y_i|=|x_i|$ for $1\leq i\leq p<h$ (see Figure \ref{C-z1_general} where $s=x_p$).
Because $y_px_p$ is a suffix of $z_2$ and $f(y_px_p)=ux_{p+1}$, it follows that $x_{p+1}\cdots x_h$ is a prefix of $z_3$.
Note that $f(x_{h-1}x_h)=x_hvf(x_h)=x_hy_1$; so $x_{p+1}\cdots x_hy_1$ is a prefix of $z_3$.
Inductively, we have that $z_3=x_{p+1}\cdots x_hy_1\cdots y_p$ because $f^{-1}(u)=y_p$, and so $|z_3|=|z_1|$ since $|y_i|=|u|=\nu$ for all $i$.
Hence,
$$z_1z_2z_3=x_1\cdots x_hy_1x_1\cdots y_px_px_{p+1}\cdots x_hy_1\cdots y_p$$
for $1\leq p < h$.
Observe that $z_1z_2z_3\sim T_\rho(\nu,\ell_1-k_1,p)$ in this case (i.e., when $1\leq p < h$).
Indeed, following Definition~\ref{tangled}, taking $s_0 \sim x_1\cdots x_h x_1\cdots x_h$
 we have that
\begin{align*}
s_1 &\sim x_1\cdots x_h y_1 x_1\cdots x_h y_1\\
s_2 &\sim x_1\cdots x_h y_1 x_1 y_2 x_2 \cdots x_h y_1 y_2\\
&\vdots\\
s_p &\sim x_1\cdots x_h y_1 x_1 y_2 x_2\cdots y_p x_p x_{p+1}\cdots x_h y_1\cdots y_p.
\end{align*}

Now suppose that $p\geq h$, then
$$
z_2=y_1x_1y_2x_2\cdots x_{h-1}y_hx_hvy_{h+1}f(x_h)f(v)y_{h+2}\cdots y_pf^{p-h}(x_h)f^{p-h}(v),
$$
where $y_i=f^{i-1}(v)f^i(x_h)$ for $1\leq i\leq p$ (see Figure \ref{C-z1_general}, where $s=f^{p-h}(x_hv)$).
Because $x_hv$ has length $|u|=\nu$, we again obtain that $|z_2|$ is a multiple of $2|u|=2\nu$; in particular $|z_2|=2p\nu$ where $p=\frac{k_2-\ell_1}{2\nu}$.
Because $f^{-1}(u)=y_p$, we have that
$$
z_3=f^{p-h+1}(x_h)\underbrace{f^{p-h+1}(v)f^{p-h+2}(x_h)}_{y_{p-h+2}} \underbrace{f^{p-h+2}(v)f^{p-h+3}(x_h)}_{y_{p-h+3}} \cdots \underbrace{f^{p-1}(v)f^{p}(x_h)}_{y_p}.$$
Note that $|z_3|=|z_1|$ since $|y_i|=|u|=\nu$ for all $i$. 
We observe that $z_1z_2z_3\sim T_\rho(\nu,\ell_1-k_1,p)$ in the case  $p\geq h$ as well.
Indeed, taking $s_0, \ldots, s_p$ as in Definition~\ref{tangled}:
\begin{align*}
s_0 &\sim x_1\cdots x_h x_1\cdots x_h\\
s_1 &\sim x_1\cdots x_h y_1 x_1\cdots x_h y_1\\
&\vdots\\
s_h &\sim x_1\cdots x_h y_1 x_1 y_2 x_2\cdots y_h x_h y_1 y_2\cdots y_h\\
s_{h+1}&\sim x_1\cdots x_h y_1 x_1 y_2 x_2\cdots y_h x_h v y_{h+1} f(x_h) y_2\cdots y_h y_{h+1}\\
&\vdots\\
s_p&\sim z_1z_2z_3.
\end{align*}

\sloppypar (2) Suppose that $\mathcal{I}_1,\mathcal{I}_2\in\Ret$.
First, observe that $|z_1|\geq |u|$.
On the contrary, suppose that $0<|z_1|<|u_1|$.
Similarly, and with the same notation, as in case (1.a), we have that
$$
z_2=y_1v^Rx_1^Ry_2f(v^R)f(x_1^R)\cdots y_pf^{p-1}(v^R)f^{p-1}(x_1^R) \text{ with } p=\frac{k_2-\ell_1}{2|u|},
$$
where $x_1=z_1$, $v$ is the nonempty prefix of $z_2$ such that $x_1v=f(u)$, $y_i=f^{i-1}(v)f^i(x_1)$ for $1\leq i\leq p$ and $f(y_p)=f(f^{p-1}(v)f^p(x_1))=f^p(v)f^{p+1}(x_1)=u$.
Note that $f^p(v)$ is a proper prefix of $u$, hence $f^p(v)^R$ is a proper suffix of $u^R$.
Considering the suffix $y_pf^{p-1}(v^R)f^{p-1}(x_1^R)$ of $z_2$, we have that $f(y_pf^{p-1}(v^R)) = f(y_p)f^p(v^R) = uf^p(v)^R \sqsubseteq w_2$ (see Figure \ref{C-z1_short}).
But $uf^p(v)^R \sqsubseteq w_2$ and $u^R \sqsubseteq w_2$ contradicts the fact that $w_2$ is a DOW; thus $|z_1|\geq |u|$.\\

In the rest of the proof we use  similar  arguments to those used in case (1.b) for repeat insertions (with the same notation).

If $p<h$, then $z_2=y_1x_1^R\cdots y_px_p^R$ with $f(y_p)=u$, which implies that $p=\frac{k_2-\ell_1}{2\nu}$ because $|y_i|=|x_i|$ for $1\leq i\leq p<h$.
It follows that
$$
x_{p+1}^R\cdots x_{h-1}^Rv^Rx_h^Rf(v^R)f(x_h^R)\cdots f^{p-1}(v^R)f^{p-1}(x_h^R)f^p(x_h^R)
$$
is a prefix of $z_3$ (see Figure \ref{C-z1_general}).
Assume that $\nu$ does not divide $|z_1|=\ell_1-k_1$ in this situation; then $v\neq\epsilon$.
Since $f(y_p)=f(f^{p-1}(v)f^p(x_h))=f^p(v)f^{p+1}(x_h)=u$, we have that $f^p(v)^R$ is a proper suffix of $u^R$ and $f^{p+1}(x_h)^R$ is a proper prefix of $u^R$.
But then $f(f^{p-1}(v^R)f^{p-1}(x_h^R)f^p(x_h^R))=f^p(v^R)f^p(x_h^R)f^{p+1}(x_h^R)\sqsubseteq w_2$ with $u^R \sqsubseteq w_2$ contradicts the fact that $w_2$ is a DOW.
Thus, 
$$
p< h = \left\lceil \frac{\ell_1-k_1}{\nu} \right\rceil = \frac{\ell_1-k_1}{\nu}.
$$
Then $|z_1|=h\nu$, $v=\epsilon$, $|x_h|=\nu$, and $y_i=f^i(x_h)$ for $1\leq i\leq p$.
Because $f^{-1}(u^R)=y_p^R$, it follows that $z_3=x_{p+1}^R\cdots x_h^Rf(x_h^R)\cdots f^p(x_h^R)=x_{p+1}^R\cdots x_h^Ry_1^R\cdots y_p^R$, and so $|z_3|=|z_1|=h\nu$ since $|y_i|=|u|$ for all $i$.
Hence,
$$z_1z_2z_3=x_1\cdots x_hy_1x_1^R\cdots y_px_p^Rx_{p+1}^R\cdots x_h^Ry_1^R\cdots y_p^R$$
for $1\leq p < h$.
Similarly as in case (1.b) we observe that $z_1z_2z_3\sim T_\tau(\nu,\ell_1-k_1,p)$ in this situation.

In case  $p\geq h$
we have that
$$
z_2=y_1x_1^Ry_2x_2^R\cdots x_{h-1}^Ry_hv^Rx_h^Ry_{h+1}f(v^R)f(x_h^R)y_{h+2}\cdots y_pf^{p-h}(v^R)f^{p-h}(x_h^R),
$$
where $y_i=f^{i-1}(v)f^i(x_h)$ for $1\leq i\leq p$ and $f(y_p)=u$.
Because $|v^Rx_h^R|=|u|$, it follows that $|z_2|$ is a multiple of $2|u|$; in particular $|z_2|=2p\nu$, where $p=\frac{k_2-\ell_1}{2\nu}$.
Moreover, we have that
$$
f^{p-h+1}(v^R)f^{p-h+1}(x_h^R)\cdots f^{p-1}(v^R)f^{p-1}(x_h^R)f^{p}(x_h^R)
$$
is a prefix of $z_3$ (see Figure \ref{C-z1_general}).
Similarly as above, $|u|=\nu$ divides $|z_1|=\ell_1-k_1$.
Thus $h=\frac{\ell_1-k_1}{\nu}$; so $|z_1|=h\nu$, $v=\epsilon$, $|x_h|=|u|$, and $y_i=f^i(x_h)$ for $1\leq i\leq p$.
It then follows that
\begin{align*}
z_2&=y_1x_1^Ry_2x_2^R\cdots x_{h-1}^Ry_hx_h^Ry_{h+1}f(x_h^R)y_{h+2}\cdots y_pf^{p-h}(x_h^R)\\
&=y_1x_1^Ry_2x_2^R\cdots x_{h-1}^Ry_hx_h^Ry_{h+1}y_1^Ry_{h+2}\cdots y_py_{p-h}^R\\
z_3&=f^{p-h+1}(x_h^R)f^{p-h+2}(x_h^R)\cdots f^{p}(x_h^R)=y_{p-h+1}^Ry_{p-h+2}^R\cdots y_p^R
\end{align*}
because $f^{-1}(u^R)=y_p^R$.
Note that $|z_3|=|z_1|=h\nu$ since $|y_i|=|u|$ for all $i$. Finally we observe that $z_1z_2z_3\sim T_\tau(\nu,\ell_1-k_1,p)$.
\end{proof}

\begin{example}\label{ex_c}
Let $w = 12312453467567 = T_\rho(2,1,3)$ and consider the insertions $\rho(2,1,2)$ and $\rho(2,14,15)$ into $w$; also, let $w' = 123456214365 = T_\tau(2,4,1)$ and consider the insertions $\tau(2,1,5)$ and $\tau(2,9,13)$ into $w'$. Then
\begin{align*}
w_1 &= \uRed{89}1\uRed{89}2312453467567
\qquad\qquad
w_1' = \uRed{78}1234\uRed{87}56214365\\
w_2 &= 1231245346756\uRed{89}7\uRed{89}
\qquad\qquad
w_2' = 12345621\uRed{78}4365\uRed{87}
\end{align*}
Note that $w_1\sim w_2$ and $w_1'\sim w_2'$.
\end{example}

\begin{corollary}\label{cor_C-conv}
For every $T_\sigma(\nu,m,j)$, we have that $T_\sigma(\nu,m,j)\star\mathcal{I}(\nu,1,m+1) \sim T_\sigma(\nu,m,j+1)$ where $\sigma\in\{\rho,\tau\}$ and $\mathcal{I}\in\Rep$ ($\mathcal{I}\in\Ret$ resp.) if $\sigma=\rho$ ($\sigma=\tau$ resp.).
\end{corollary}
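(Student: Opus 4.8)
The plan is to prove, by induction on $j\ge 0$, that $s_j\star\mathcal{I}(\nu,1,m+1)\sim s_{j+1}$, where $(s_i)_{i\ge 0}$ is the sequence from Definition~\ref{tangled} defining $T_\sigma(\nu,m,j)$ (so $s_i=T_\sigma(\nu,m,i)$ for $i\ge 1$, while $s_0$ is the seed), and where $\mathcal{I}\in\Rep$ if $\sigma=\rho$ and $\mathcal{I}\in\Ret$ if $\sigma=\tau$. Since $s_{j+1}=s_j\star\sigma(\nu,|s_j|-m+1,|s_j|+1)$ by Definition~\ref{tangled}, this says exactly that the ``front'' insertion $\mathcal{I}(\nu,1,m+1)$ and the ``end'' insertion $\sigma(\nu,|s_j|-m+1,|s_j|+1)$ are equal for $s_j$, which gives the corollary for every $j\ge 1$.

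For the base case $j=0$ I would argue directly. If $\sigma=\rho$ then $s_0=PP$ with $P=12\cdots m$, and a short computation gives $s_0\star\rho(\nu,1,m+1)=uPuP$ and $s_1=PuPu$, both repeat words of size $m+\nu$, hence equivalent. If $\sigma=\tau$ then $s_0=\text{Int}(h,\nu)$ with $h=m/\nu$, and similarly $s_0\star\tau(\nu,1,m+1)$ and $s_1$ are both of the form $\text{Int}(h+1,\nu)$, differing only in the order of their length-$\nu$ blocks, hence equivalent. (The degenerate case $m=0$ is absorbed by the convention for $k=\ell$.)

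For the inductive step, assume $s_j\star\mathcal{I}(\nu,1,m+1)\sim s_{j+1}$. Write $s_j=AB$ with $A=\pre(s_j,|s_j|-m)$ and $B=\suf(s_j,m)$; since $|A|=m+2j\nu\ge m$, write further $A=\pre(A,m)A_2$. By Definition~\ref{tangled}, $s_{j+1}=A\,u\,B\,u'$ for a fresh length-$\nu$ block $u$ (with $u'=u$ if $\sigma=\rho$ and $u'=u^R$ if $\sigma=\tau$). Using $\pre(s_{j+1},m)=\pre(s_j,m)=\pre(A,m)$, I would compute that both words
\[
s_{j+1}\star\mathcal{I}(\nu,1,m+1)\qquad\text{and}\qquad\bigl(s_j\star\mathcal{I}(\nu,1,m+1)\bigr)\star\sigma(\nu,K-m+1,K+1),
\]
where $K=|s_j\star\mathcal{I}(\nu,1,m+1)|=|s_j|+2\nu=|s_{j+1}|$, are equal up to renaming of their fresh blocks to $\hat u\,\pre(A,m)\,\hat u'\,A_2\,u\,B\,u'$, hence are equivalent. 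Because a fixed insertion sends equivalent words to equivalent words, the inductive hypothesis yields $\bigl(s_j\star\mathcal{I}(\nu,1,m+1)\bigr)\star\sigma(\nu,K-m+1,K+1)\sim s_{j+1}\star\sigma(\nu,|s_{j+1}|-m+1,|s_{j+1}|+1)=s_{j+2}$; combining the two equivalences, $s_{j+1}\star\mathcal{I}(\nu,1,m+1)\sim s_{j+2}$, which closes the induction.

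The hard part will be the bookkeeping in the inductive step: pinning down the factors $A$, $B$, $\pre(A,m)$, $A_2$, and verifying that the ``end'' insertion into $s_j\star\mathcal{I}(\nu,1,m+1)$ lands at exactly positions $K-m+1$ and $K+1$ so that it reproduces the block layout of $s_{j+1}\star\mathcal{I}(\nu,1,m+1)$. The length identity $K=|s_{j+1}|$ is precisely what makes the two ``end'' insertions the same insertion, and one must track the primed blocks $u'$ and $\hat u'$ uniformly so that the single argument covers $\sigma=\rho$ and $\sigma=\tau$ simultaneously.
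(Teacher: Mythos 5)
Your proof is correct, but it takes a genuinely different route from the paper. The paper disposes of this corollary in one line, saying it "follows by using similar arguments as in the proofs of Lemma~\ref{lem_caseC1} and Proposition~\ref{prop_caseC2}" --- i.e., it appeals to the same image-tracking machinery (following $f^{i}(u)$ through the block decomposition $y_1x_1\cdots y_px_p$ of the tangled cord) that was used to prove the forward direction. You instead induct on the level $j$ and, in the inductive step, observe that the front insertion $\mathcal{I}(\nu,1,m+1)$ and the end insertion $\sigma(\nu,|s_j|-m+1,|s_j|+1)$ commute on the nose: both orders of application produce the word $\hat u\,\pre(A,m)\,\hat u'\,A_2\,u\,B\,u'$ up to renaming of the fresh blocks, after which the inductive hypothesis plus the fact (Remark~\ref{lem_equal-ins}) that a fixed insertion preserves $\sim$ closes the argument. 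I checked the bookkeeping: $|A|=m+2j\nu\geq m$ so $\pre(s_{j+1},m)=\pre(A,m)$ as you claim, the end insertion into $s_j\star\mathcal{I}(\nu,1,m+1)$ does land at positions $K-m+1$ and $K+1$ with $K=|s_{j+1}|$ and splits off exactly the suffix $B$, and the base cases ($uPuP\sim PuPu$ and $u x_1\cdots x_h u^R x_1^R\cdots x_h^R\sim x_1\cdots x_h u\, x_1^R\cdots x_h^R u^R$, both instances of a single canonical form) are right, as is the $m=0$ degeneration under the $k=\ell$ convention. What your approach buys is a self-contained, explicit construction of the equivalence map that never re-enters the Lemma~\ref{lem_L-S} analysis; what the paper's approach buys is brevity, since it reuses the already-established block structure of $T_\sigma(\nu,m,j)$ rather than re-deriving it level by level.
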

\begin{proof}
The result follows by using similar arguments as in the proofs of Lemma~\ref{lem_caseC1} and Proposition~\ref{prop_caseC2}.
\end{proof}

A DOW is a \emph{palindrome} if it is equivalent to its reverse \cite{Burns2013}.
Let $m\geq 1$ be an integer; observe that the repeat word $12\cdots m 12\cdots m$ and the return word $12\cdots m m \cdots 21$ are palindromes.
Let $h,\nu\geq 1$.
Observe that $\text{Int}(h,\nu) = x_1 \cdots x_h x_1^R \cdots x_h^R$ is a palindrome.
Indeed, consider $f:\Sigma\to\Sigma$ such that $f(x_i)=x_{h-i+1}$ for all $i$.
Similarly, $\text{Nes}(h,\nu) = x_1 \cdots x_h x_h \cdots x_1$ is a palindrome; consider $f:\Sigma\to\Sigma$ such that $f(x_i)=x_i^R$ for all $i$.

\begin{proposition}\label{prop_palind}
Every $T_\sigma(\nu,m,j)$ is a palindrome where $\sigma\in\{\rho, \tau\}$.
\end{proposition}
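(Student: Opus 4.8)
The plan is to prove the statement by induction on the level $j$, using the recursive definition of tangled cords (Definition~\ref{tangled}) together with the observation already recorded in the excerpt that the base cases $s_0$ and $t_0$ are palindromes. The key structural fact to establish first is the following: if $w$ is a palindrome and $w' = w \star \sigma(\nu, k, \ell)$ is a tangled-cord insertion (i.e., $k = |w| - m + 1$ and $\ell = |w|+1$ for the appropriate $m$), then $w'$ is again a palindrome, \emph{provided} the ``matching'' part of $w$ — the factor of length $m$ at the end — is carried by the palindrome equivalence map to a corresponding factor at the front. So the induction hypothesis must be strengthened: I would carry along not just ``$T_\sigma(\nu,m,j)$ is a palindrome'' but also a statement describing how the reversing equivalence map $g_j$ (with $g_j(T_\sigma(\nu,m,j)^R) = T_\sigma(\nu,m,j)$) acts on the last $m$ symbols, namely that it sends the suffix of length $m$ onto the prefix of length $m$ (up to the appropriate repeat/return twist).

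First I would set up notation: write $s_j = T_\rho(\nu,m,j)$ with $s_j = s_{j-1} \star \rho(\nu, |s_{j-1}|-m+1, |s_{j-1}|+1)$, so $s_j = z_1 u z_2 u z_3$ where $z_1 z_2 z_3 = s_{j-1}$, $z_3$ is the suffix of $s_{j-1}$ of length $m$, $z_2 = \epsilon$... more precisely $z_1$ is the prefix of length $|s_{j-1}|-m$, $z_2 = \epsilon$ when $\ell = |s_{j-1}|+1$ collapses against $k$? Actually with $k = |s_{j-1}|-m+1$ and $\ell = |s_{j-1}|+1$ we get $z_1$ of length $|s_{j-1}|-m$, $z_2$ of length $m$ (the last $m$ symbols of $s_{j-1}$), and $z_3 = \epsilon$; so $s_j = z_1 u z_2 u$ where $z_1 z_2 = s_{j-1}$ and $|z_2| = m$. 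Then $s_j^R = u^R z_2^R u^R z_1^R$. Using the inductive reversing map $g$ on $s_{j-1}$, which by the strengthened hypothesis sends $z_2^R$ (suffix of $s_{j-1}^R$ read backwards, i.e. the reversed last block) appropriately onto the front, I extend $g$ to act on the inserted symbols $u$ by $g(u) = u^R$ for the $\rho$ case. I then check directly that this extended map sends $s_j^R$ to $s_j$; the computation reduces to the inductive statement about $g$ on $s_{j-1}$ plus the bookkeeping that the two inserted copies of $u$ in $s_j^R$ land on the two inserted copies in $s_j$. The $\tau$ case is entirely parallel, with $g(u) = u$ on the inserted block and the return-word twist handled by the $u^R$ already present.

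I would organize the writeup as: (i) state the strengthened inductive claim including the description of the reversing map on the boundary block; (ii) verify the base case $j=0$ by exhibiting the explicit reversing maps, citing the palindrome observations for $12\cdots m\, 12 \cdots m$, $\mathrm{Int}(h,\nu)$, $12\cdots m\, m\cdots 21$ made just before the proposition; (iii) do the inductive step as above, first for $\sigma = \rho$ then noting the $\sigma = \tau$ case differs only in replacing $u$ by $u^R$ in one slot. The main obstacle I anticipate is getting the boundary bookkeeping exactly right: one must be careful that the last $m$ symbols of $T_\sigma(\nu,m,j)$ — which are precisely the ``$m$ symbols from the last symbol'' into which the next insertion places its first copy — are the same block (as a set of positions and as a subword) onto which the reversing map sends the prefix of length $m$, and that this property is preserved by the insertion. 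Making the strengthened hypothesis precise enough to push through, while not cluttering it, is the delicate part; once that is pinned down the verification in the step is a short direct check rather than a real calculation.
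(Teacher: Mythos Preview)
Your inductive plan has a genuine gap: the reversing equivalence map for $s_j$ is \emph{not} obtained by extending the reversing map for $s_{j-1}$ via $g(u)=u^R$ (or $g(u)=u$). The reason is positional. With $s_j=z_1uz_2u$ and $|z_1|=|s_{j-1}|-m$, we have $s_j^R=u^Rz_2^Ru^Rz_1^R$, so the two copies of $u^R$ occupy positions $1,\dots,\nu$ and $\nu+m+1,\dots,2\nu+m$ of $s_j^R$, whereas the two copies of $u$ occupy positions $|z_1|+1,\dots,|z_1|+\nu$ and $|s_j|-\nu+1,\dots,|s_j|$ of $s_j$. Unless $|z_1|=0$ these blocks do not align, so any symbol-to-symbol map sending $s_j^R$ to $s_j$ must carry the symbols of $u$ into $\Sigma[s_{j-1}]$, not back to $\Sigma[u]$. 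Concretely, take $T_\rho(2,1,2)=s_2=1231245345$: the reversing map is $i\mapsto 6-i$, which sends $4\mapsto 2$ and $5\mapsto 1$; your proposed extension of $g_1$ (the reversing map $i\mapsto 4-i$ for $s_1=123123$) by $g(4)=5$, $g(5)=4$ applied to $s_2^R=5434521321$ yields $4515423123\neq s_2$. So the statement ``the two inserted copies of $u$ in $s_j^R$ land on the two inserted copies in $s_j$'' is simply false, and no amount of boundary bookkeeping on the last $m$ symbols of $s_{j-1}$ repairs it, because the obstruction is that $g_j$ does not even restrict to $g_{j-1}$ on $\Sigma[s_{j-1}]$.

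The paper sidesteps this entirely. Its inductive step does not build $g_j$ at all; instead it invokes Corollary~\ref{cor_C-conv}, which says $T_\sigma(\nu,m,j-1)\star\mathcal{I}(\nu,1,m+1)\sim T_\sigma(\nu,m,j)$ --- i.e., inserting at the \emph{front} yields a word equivalent to the defining insertion at the \emph{end}. Combining this with the general identity $(w\star\mathcal{I}(\nu,k,\ell))^R\sim w^R\star\mathcal{I}(\nu,|w|-\ell+2,|w|-k+2)$ and the inductive hypothesis $w\sim w^R$ gives a four-line chain of equivalences. If you want a direct construction instead, the pattern of the example above suggests proving that for $T_\rho(\nu,m,j)$ in ascending order the map $i\mapsto M+1-i$ (where $M$ is the largest symbol) is always the reversing map; this can be pushed through by induction, but the hypothesis you must carry is about the full structure of $s_j$, not just its last $m$ symbols.
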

\begin{proof}
Let $\nu\geq 1$, $m\geq 0$, and $\sigma\in\{\rho,\tau\}$.
We show that the result holds by inducting on $j$.
Note that $T_\rho(\nu,m,1)$ is a repeat word of size $m+\nu$ and $T_\tau(\nu,m,1) = \text{Int}(\frac{m}{\nu}+1,\nu)$.
In either case, $T_\sigma(\nu,m,1)$ is a palindrome.
Next, suppose that $w=T_\sigma(\nu,m,j-1)$ is a palindrome for $j>1$, and let $n=|w|$.
The following shows that $T_\rho(\nu,m,j)$ is a palindrome:
\vskip.5\baselineskip
\centerline{
\begin{tabular}{rll}
$T_\rho(\nu,m,j)$ & $\sim w\star\rho(\nu,1,m+1)$ & (by Corollary~\ref{cor_C-conv})\\
& $\sim w^R\star\rho(\nu,1,m+1)$ & (because $w\sim w^R$)\\
& $\sim \left(w\star\rho(\nu,n-m+1,n+1)\right)^R$ & \\
& $\sim \left(T_\rho(\nu,m,j)\right)^R$ & (by the definition of $T_\rho(\nu,m,j)$).
\end{tabular}
}
\vskip.5\baselineskip
The case for $T_\tau(\nu,m,j)$ follows equivalently.
\end{proof}

Observe that there exist words that are palindromes but are neither $\rho$-tangled cords nor $\tau$-tangled cords, like $12324143$.
However, note that $12324143$ is equivalent to a cyclic permutation of the tangled cord $T_\rho(1, 1, 3)$.

\begin{definition}
Let $w\in\Sigma_{DOW}$, and let $uu'$ be a repeat (return resp.) word in $w$.
We say that $uu'$ is a {\em maximal} repeat (return resp.) word in $w$ if for any repeat (return resp.) word $vv'$ in $w$ such that $u\sqsubseteq v$, we have that $u=v$.
\end{definition}

\begin{example}
Consider $T_\mathcal{\rho}(2,3,2)=s_2$ and $T_\mathcal{\tau}(2,4,1)=s_1'$.
\vskip.5\baselineskip
\centerline{
\begin{tabular}{ll}
$s_0 = 123123$ & $s_0' = 12342143$\\
$s_1 = s_0\star\rho(2,4,7) = 123\uRed{45}123\uRed{45}$ & 
$s_1' = s_0'\star\tau(2,5,9) = 1234\uRed{56}2143\uRed{65}$\\
$s_2 = s_1\star\rho(2,8,11) = 1234512\uRed{67}345\uRed{67}$
\end{tabular}
}
\vskip.5\baselineskip
Note that $T_\mathcal{\rho}(2,3,2)$ contains four maximal repeat words: $33$, $1212$, $4545$, and $6767$.
On the other hand, $T_\mathcal{\tau}(2,4,1)$ contains three maximal return words: $1221$, $3443$ and $5665$.
\end{example}

As the previous example suggests, every $\tau$-tangled cord ends with a maximal return word of size $\nu$.
The same property holds for $\rho$-tangled cord whenever $j>1$.

\begin{proposition}
The following hold.
\begin{enumerate}[label={(\arabic*)}]
\setlength{\itemsep}{-3pt}
\item If $T_\mathcal{\rho}(\nu_1,m_1,j_1)\sim T_\mathcal{\rho}(\nu_2,m_2,j_2)$ and $j_1>1$, then $(\nu_1, m_1, j_1)=(\nu_2, m_2, j_2)$.
\item If $T_\mathcal{\tau}(\nu_1,m_1,j_1)\sim T_\mathcal{\tau}(\nu_2,m_2,j_2)$, then $(\nu_1, m_1, j_1)=(\nu_2, m_2, j_2)$.
\end{enumerate}
\end{proposition}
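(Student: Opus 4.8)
The plan is to reconstruct the triple $(\nu,m,j)$ from the word $T_\sigma(\nu,m,j)$ itself, so that the assignment $(\nu,m,j)\mapsto T_\sigma(\nu,m,j)$ is injective under the stated hypotheses. I would begin with three preliminaries. (a) Each $T_\sigma(\nu,m,j)$ is in ascending order: by induction on $j$, since $s_0$ and $\text{Int}(h,\nu)$ are in ascending order and every further insertion is performed within the last $m$ symbols, so the prefix preceding the inserted block already contains the first occurrence of every symbol; hence equivalent tangled cords of the same type are equal as words, and it suffices to treat equality. (b) $|T_\sigma(\nu,m,j)|=2(m+j\nu)$, so $m+j\nu$ can be read off from the word. (c) A DOW has at most one maximal repeat (resp.\ return) word whose second occurrence is a suffix: two of them would have nested heads, and writing the DOW through the longer one exhibits the shorter head as the head of a repeat/return word ending the DOW, contradicting its maximality if they differ. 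Combining (c) with the observation made above — every $\tau$-tangled cord ends with a maximal return word of size $\nu$, and every $\rho$-tangled cord with $j>1$ ends with a maximal repeat word of size $\nu$ — the size of the unique ``ending'' maximal repeat (resp.\ return) word of a tangled cord is a well-defined number, equal to $\nu$.

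For (1), let $w=T_\rho(\nu_1,m_1,j_1)=T_\rho(\nu_2,m_2,j_2)$ with $j_1>1$. First I would note that $T_\rho(\nu,m,j)$ is a square (equals $vv$) if and only if $j=1$: for $j=1$ it is $(12\cdots(m+\nu))^2$, while for $j\ge 2$ its first half is a prefix of $s_{j-1}$ (the last insertion, being near the end, leaves the first $m+j\nu$ symbols untouched), so the alphabet of that half lies in $\Sigma[s_{j-1}]\subsetneq\Sigma[w]$, whereas $w=vv$ would force the alphabet of the first half to be all of $\Sigma[w]$. As $j_1>1$, $w$ is not a square, so $j_2>1$ as well; then (c) gives $\nu_1=\nu_2=:\nu$. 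Now set $u:=\suf(w,\nu)$: in each representation $u$ is the last inserted word, and its $\nu$ symbols are disjoint from the rest of $w$, so the factor $u$ occurs in $w$ exactly twice, namely as the suffix and once more at position $|z_1^{(i)}|+1$, where $w=z_1^{(i)}\,u\,z_2^{(i)}\,u$ in representation $i$. Since both occurrences depend only on $w$, the non-suffix ones coincide, so $|z_1^{(1)}|=|z_1^{(2)}|$, whence $m_1=|w|-2\nu-|z_1^{(1)}|=m_2=:m$; finally $|w|=2(m+j_i\nu)$ forces $j_1=j_2$, so $(\nu_1,m_1,j_1)=(\nu_2,m_2,j_2)$.

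Part (2) is the same argument, streamlined. The quoted property for $\tau$-tangled cords holds at every level (indeed $T_\tau(\nu,m,1)=\text{Int}(\tfrac{m}{\nu}+1,\nu)$ ends with a maximal return word of size $\nu$), so $\nu_1=\nu_2=:\nu$ follows at once from (c), with no square-freeness step and no restriction on $j_1$. To recover $m$, set $u:=(\suf(w,\nu))^R$; this is the last inserted word $u_{j_i}$ in representation $i$, and the fresh symbols of $u$ single out its non-suffix copy in $w$: for $\nu\ge 2$ the factor $u$ occurs there only (the suffix being the distinct factor $u^R$), while for $\nu=1$, where $u=u^R$, it is the unique non-suffix occurrence of that symbol. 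As before this forces $|z_1^{(1)}|=|z_1^{(2)}|$, hence $m_1=m_2$, and then $|w|=2(m+j_i\nu)$ yields $j_1=j_2$, completing the proof.

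The step I expect to be the main obstacle is (c) together with its interaction with the quoted ``ends with a maximal repeat/return word of size $\nu$'' property: one needs both the existence of such a suffix-ending maximal word (the quoted fact, which itself rests on the recursive structure of the tangled cord and on the impossibility of absorbing an earlier inserted block into a longer repeat/return word that reaches the end) and its uniqueness, so that ``the size of the ending maximal repeat/return word'' is a bona fide invariant of the word. The remaining subtleties are comparatively minor: the $j=1$ exception for $\rho$, handled via square-freeness — which is precisely why the hypothesis $j_1>1$ is needed and why (2) is cleaner — and the $\nu=1$ degeneracy when recovering $m$, where $u=u^R$ and one must invoke the suffix to distinguish the two occurrences of that factor.
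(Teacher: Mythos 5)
Your proof is correct and its skeleton matches the paper's: both arguments extract $\nu$ from the maximal repeat/return word that ends the tangled cord (the unproven-but-stated fact preceding the proposition, which both proofs lean on equally), exclude the $j_2=1$ possibility for $T_\rho$ separately, and then finish with the length identity $|w|=2(m+j\nu)$. You diverge in two places, both to good effect. First, where the paper recovers $m$ by comparing prefixes — the first repeated symbol of $T_\rho(\nu,m,j)$ sits at position $m+\nu+1$ when $m>0$ (and at $\nu+1$ when $m=0$), which is why the paper splits into the cases $m_1,m_2>0$ versus some $m_i=0$ — you recover $m$ from the suffix, by locating the unique non-suffix occurrence of $\suf(w,\nu)$ (resp. its reverse in the $\tau$ case); this avoids the case split and handles $\nu=1$ cleanly via the single-symbol degeneracy you note. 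Second, you make explicit two things the paper asserts without argument: the uniqueness of the ending maximal repeat/return word (your observation (c), which is immediate from the definition of maximality once one notes the shorter head is a factor of the longer), and the reason a repeat word of size $m_2+\nu_2$ cannot equal $T_\rho(\nu_1,m_1,j_1)$ with $j_1>1$ (your alphabet-of-the-first-half argument, using that for $j\ge 2$ the first half of $s_j$ is a prefix of $s_{j-1}$ since $m+j\nu\le m+2(j-1)\nu$). The ascending-order reduction to literal equality is a further tidying step the paper does not take but which checks out by the induction you sketch.
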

\begin{proof}
(1) Let $T_\mathcal{\rho}(\nu_1,m_1,j_1)$ and $T_\mathcal{\rho}(\nu_2,m_2,j_2)$ be as given with $j_1>1$.
Because these DOWs are equivalent and thus have equal lengths, note that $m_1+\nu_1j_1=m_2+\nu_2j_2$.
Moreover, $T_\mathcal{\rho}(\nu_1,m_1,j_1)$ ends with a maximal repeat word of size $\nu_1$.
If $j_2=1$, then $T_\mathcal{\rho}(\nu_2,m_2,j_2)$ is a repeat word of size $m_2+\nu_2$ which cannot be equivalent to $T_\mathcal{\rho}(\nu_1,m_1,j_1)$; thus, $j_2>1$.
It follows that $T_\mathcal{\rho}(\nu_2,m_2,j_2)$ ends with a maximal repeat word of size $\nu_2$; so $\nu_1=\nu_2$.
By setting $\nu=\nu_1$, we get $|m_1-m_2| = \nu|j_1-j_2|$.
We now distinguish between the following two cases:
\vskip.5\baselineskip
\centerline{
\begin{tabular}{cc}
(i) $m_1,m_2>0$ & (ii) $m_1=0$ or $m_2=0$.
\end{tabular}
}
\vskip.5\baselineskip

\sloppypar (1.i) Suppose $m_1,m_2>0$.
Then $\pre(T_\mathcal{\rho}(\nu,m_1,j_1), m_1+\nu+1)=12\cdots m_1v_11$ and $\pre(T_\mathcal{\rho}(\nu,m_2,j_2), m_2+\nu+1)=12\cdots m_2\hat{v}_11$, with $|v_1|=|\hat{v}_1|=\nu$.
It follows that $m_1=m_2$ because $T_\mathcal{\rho}(\nu,m_1,j_1)\sim T_\mathcal{\rho}(\nu,m_2,j_2)$.

\sloppypar (1.ii) Suppose that $m_1=0$ but $m_2\neq 0$.
Then $\pre(T_\mathcal{\rho}(\nu,m_1,j_1), 2\nu)=v_1v_1$ while $\pre(T_\mathcal{\rho}(\nu,m_2,j_2), m_2+\nu)=12\cdots m_2\hat{v}_1$, with $|v_1|=|\hat{v}_1|=\nu$.
But this contradicts the assumption that  $T_\mathcal{\tau}(\nu,m_1,j_1)\sim T_\mathcal{\tau}(\nu,m_2,j_2)$.
So $m_1=0$ implies $m_2=0$.
The same argument can be applied to show that $m_2=0$ implies $m_1=0$.

In both cases above, $m_1=m_2$ and so $j_1=j_2$.
We can apply a similar argument to prove (2).
\end{proof}

\subsection{Repeat vs Return Insertions}\label{ss_vs}
In this section we show that two distinct nontrivial insertions which yield equivalent words must both be of the same type, that is, both repeat insertions or both return insertions.
We note that the proof for this result follows the same methodology, notation,  and very similar arguments to those presented in Sections \ref{ss_A+B} and \ref{ss_C}.

\begin{theorem}\label{thm_repret}
\sloppypar Let $\mathcal{I}_1(\nu,k_1,\ell_1)$ and $\mathcal{I}_2(\nu,k_2,\ell_2)$ be two distinct nontrivial insertions into $w\in\Sigma_{DOW}$.
If $w_1=w\star\mathcal{I}_1(\nu,k_1,\ell_1)\sim w\star\mathcal{I}_2(\nu,k_2,\ell_2)=w_2$, then
either $\mathcal{I}_1,\mathcal{I}_2\in\Rep$ or $\mathcal{I}_1,\mathcal{I}_2\in\Ret$.
\end{theorem}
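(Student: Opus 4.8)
The plan is to argue by contradiction. Suppose $\mathcal{I}_1$ is a repeat insertion and $\mathcal{I}_2$ a return insertion; by the $\mathcal{I}_1\leftrightarrow\mathcal{I}_2$ symmetry of the statement we may assume this labeling, and the reversal relation $(w\star\mathcal{I}(\nu,k,\ell))^R\sim w^R\star\mathcal{I}(\nu,|w|-\ell+2,|w|-k+2)$ (which preserves the type of an insertion) further cuts down the bookkeeping. Since the insertions are nontrivial, $\nu\geq 2$, so the inserted SOW $u$ satisfies $u\neq u^R$. As in Section~\ref{s_equiv}, the pair $(\mathcal{I}_1,\mathcal{I}_2)$ is interleaving, nested, or sequential — each occurring with the repeat insertion in either the outer/earlier or inner/later position, some of these being identified by the reversal relation — and I would dispatch the cases by the same ``image tracking'' used in Propositions~\ref{prop_caseA}, \ref{prop_caseB}, Lemma~\ref{lem_caseC1}, and Proposition~\ref{prop_caseC2}.

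The key preliminary, common to all cases, is to feed the two inserted words through Lemma~\ref{lem_comp}. Since $f(w_1)=w_2$, the word $w_2$ contains the repeat word $f(u)f(u)$ (the $f$-image of the word inserted by $\mathcal{I}_1$) together with the return word $uu^R$ (inserted by $\mathcal{I}_2$); as $|f(u)|=|u|=\nu\geq 2$, Lemma~\ref{lem_comp} forces $f(u)\cap u=\{\epsilon\}$, hence $\Sigma[f(u)]\cap\Sigma[u]=\emptyset$, and since $f(u)$ is a factor of $w_2$ this gives $\Sigma[f(u)]\subseteq\Sigma[w]$. Symmetrically, applying the same reasoning to $w_1=f^{-1}(w_2)$ (which contains the repeat word $uu$ and the return word $f^{-1}(u)(f^{-1}(u))^R$) yields $\Sigma[f^{-1}(u)]\subseteq\Sigma[w]$; moreover a factor of $w_1$ or $w_2$ built only from symbols of $\Sigma[w]$ avoids the inserted blocks and hence is a factor of $w$, so $w$ itself contains both $f(u)f(u)$ and $f^{-1}(u)(f^{-1}(u))^R$ and Lemma~\ref{lem_comp} again gives $\Sigma[f^{2}(u)]\subseteq\Sigma[w]$. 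The upshot is that $f$ moves $\Sigma[u]$ entirely off itself, so the fresh symbols can only reappear in an iterated image $f^{i}(u)$ after leaving $\Sigma[u]$, whereas every block $f^{i}(u)$ occurring inside one of the $\Sigma[w]$-factors $z_j$ automatically has $\Sigma[f^{i}(u)]\subseteq\Sigma[w]$.

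In each case I would then set $w=z_1z_2z_3$ exactly as in Section~\ref{s_equiv}, read off the boundary equivalences ($f(uz_1)=z_1u$ and $f(uz_3)=z_3u^R$ for interleaving, with the analogues for the nested and sequential configurations, and invoking Lemma~\ref{lem_z3} in the sequential case so that $|z_2|\geq\nu$), and apply Lemmas~\ref{lem_L-S} and \ref{lem_short} at each boundary to write $z_1$, $z_3$ (and $z_2$, for sequential insertions) as concatenations of iterated images $f^{i}(u)$ and of reversed images $f^{i}(u)^R$. The mechanism is that the expansion grows blocks $f^{i}(u)$ on the ``repeat side'' of the configuration but reversed blocks $f^{i}(u)^R$ on the ``return side''; pushing the two expansions until they meet forces either a proper overlap of some $f^{i}(u)$ with one of the inserted blocks $u$, $u^R$ — which puts a symbol of $\Sigma[u]$ inside some $z_j\sqsubseteq w$, impossible — or an identity $f^{i}(u)=u^R$ (or $f^{i}(u)=u$) with $i\geq 1$; since then $\Sigma[f^{i}(u)]=\Sigma[u]$ while this very block also sits inside some $z_j\sqsubseteq w$, we again contradict $\Sigma[u]\cap\Sigma[w]=\emptyset$ — unless $\nu=1$, i.e., the insertions were trivial. (An alternative route is to iterate the construction of the previous paragraph, producing in $w$ ever-longer repeat words $(f(u)f^{2}(u)\cdots f^{r}(u))^2$, which the finiteness of $w$ cannot sustain once a block with symbol set $\Sigma[u]$ is forced.)

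I expect the sequential case to be the main obstacle, just as Proposition~\ref{prop_caseC2} was the hardest same-type case: there the two insertions sit at opposite ends of a long middle block $z_2$, so the tracking must be carried through all of $z_2$, and one must manage the partial blocks ($v_1,v_2$ with $u=v_1v_2$ arising when $\nu\nmid|z_1|$) and the sub-cases $0<|z_1|<\nu$, $|z_1|\geq\nu$, $p<h$, $p\geq h$ of that proof. The delicate point is to extract the contradiction uniformly instead of first re-deriving the full structural conclusion of Proposition~\ref{prop_caseC2} and only then noticing the mismatch; the cleanest closing move is, in each sub-case, to isolate the first place where the repeat-side and return-side expansions collide, read off from it a single relation $f^{j}(u)\in\{u,u^R\}$ with $j\geq 1$, and conclude via $\Sigma[f^{j}(u)]=\Sigma[u]$ together with $\Sigma[u]\cap\Sigma[w]=\emptyset$.
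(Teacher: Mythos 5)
Your plan matches the paper's proof of Theorem~\ref{thm_repret} essentially step for step: argue by contradiction with $\mathcal{I}_1\in\Rep$ and $\mathcal{I}_2\in\Ret$, split into the interleaving/nested/sequential configurations, track the images $f^i(u)$ through $z_1,z_2,z_3$ via Lemmas~\ref{lem_L-S} and~\ref{lem_short} exactly as in Propositions~\ref{prop_caseA}, \ref{prop_caseB}, Lemma~\ref{lem_caseC1} and Proposition~\ref{prop_caseC2}, and at the collision point invoke Lemma~\ref{lem_comp} on the repeat word $f^i(u)f^i(u)$ and the return word $uu^R$ in $w_2$ to force $\nu=1$. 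The only cosmetic difference is that you phrase the final contradiction through symbol-set disjointness of the iterated images rather than applying Lemma~\ref{lem_comp} directly at the overlap, but the mechanism is the same.
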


\begin{proof}
Let $w=z_1z_2z_3$, as in Sections \ref{ss_A+B} and \ref{ss_C} be of length $n$, and  $w_1\sim w_2$ with equivalence map $f$.
Because $\mc I_1$ and $\mc I_2$ are nontrivial, $\nu\ge 2$.
We prove the result by contradiction.

\begin{figure}[h!]
\begin{minipage}{.2\textwidth}
\centering
\bf{(a)}\includegraphics[scale=.8]{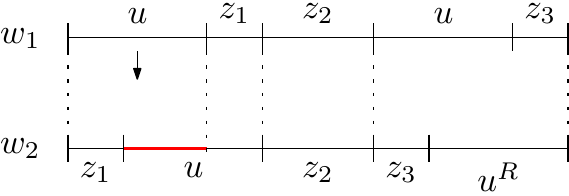}
\end{minipage}
\begin{minipage}{1\textwidth}
\centering
\bf{(b)}\includegraphics[scale=.8]{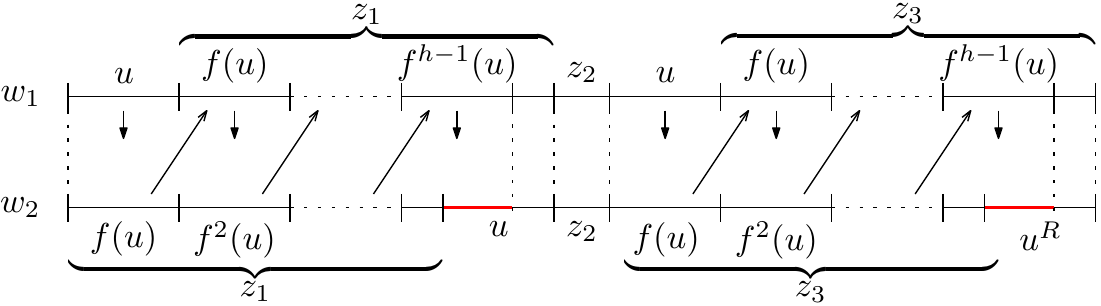}
\end{minipage}
\caption{Representation of $w_1$ and $w_2$ when $\mathcal{I}_1\in\Rep$, $\mathcal{I}_2\in\Ret$; (\textbf{a}) $0<|z_1|<|u|$, or (\textbf{b}) $|z_1|\geq |u|$.}
\label{A-RepRet} 
\end{figure}

{\it Case of interleaving or nested insertions.}  Suppose $k_1 < k_2 \leq \ell_1 < \ell_2$ and without loss of generality we assume that $\mathcal{I}_1\in\Rep$ and $\mathcal{I}_2\in\Ret$.
If $|z_1|<|u|$, then $f(u)\cap u\neq\emptyset$ (see Figure \ref{A-RepRet}a).
Because $uu$ is a repeat word in $w_1$, $f(u)f(u)$ is a repeat word in $w_2$; so by Lemma \ref{lem_comp} we have $|f(u)|=1$ or $|u|=1$ because $uu^R$ is return word in $w_2$.
Thus $|u|=1$ implying that the insertions must be trivial. 
If $|z_1|\geq|u|$, then observe that $f(u)$ is a prefix of $z_1$.
But then $f^2(u)$ is either a factor of $z_1$ or $f^2(u)\cap u\neq\emptyset$.
Inductively, for some $h>1$ we have that $f^i(u)$ is a factor of $z_1$ for $1\leq i<h$ and $f^h(u)\cap u\neq\emptyset$ (see Figure \ref{A-RepRet}b).
Consider the second occurrence of $u$ in $w_1$; we have that $f^i(u)$ is a factor of $z_3$ for $1\leq i<h$ and $f^h(u)\cap u^R\neq\emptyset$.
But $f^i(u)f^i(u)$ is a repeat word in $w_2$ of size $|u|$ for every $1\leq i\leq h$; by Lemma~\ref{lem_comp}, $f^h(u)\cap u\neq\emptyset$ implies that $|f^h(u)|=1$ or $|u|=1$ because $uu^R$ is a return word in $w_2$. Thus, $|u|=1$ contradicting $\nu\ge 2$.
The case of nested insertions, that is, $k_1 < k_2 \leq \ell_2 < \ell_1$, follows by similar arguments.

{\it Sequential insertions.} Suppose that $k_1 \leq \ell_1 < k_2 \leq \ell_2$ and  without loss of generality
assume that $\mathcal{I}_1\in\Rep$ and $\mathcal{I}_2\in\Ret$.
As in Lemma \ref{lem_caseC1} and Proposition \ref{prop_caseC2}, we consider the  cases: (I) $k_1=\ell_1$ and (II) $k_1\neq\ell_1$.
In both cases recall that $|z_2|\geq|u|$ by Lemma \ref{lem_z3}.
\begin{figure}[h]
\centering
 \includegraphics[]{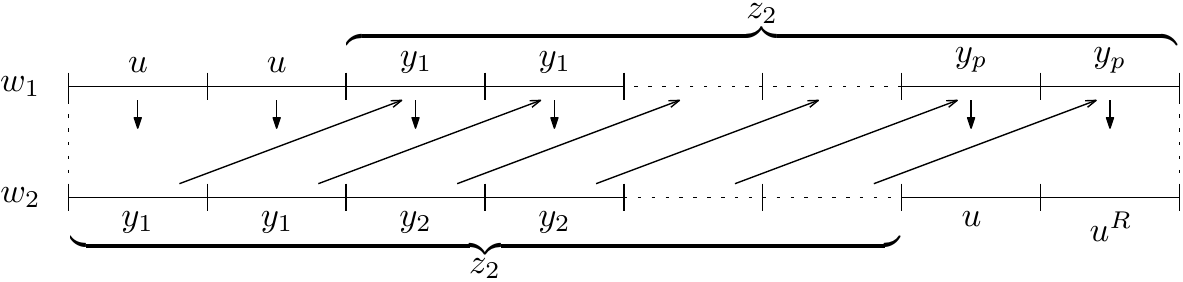}
  \caption{Representation of $w_1$ and $w_2$ when $\mathcal{I}_1\in\Rep$, $\mathcal{I}_2\in\Ret$, and $k_1 = \ell_1$.}
  \label{C-noz2-z3-RepRet} 
\end{figure}
(I) Let $k_1=\ell_1$; i.e., $z_1=\epsilon$.
As in proof of Lemma \ref{lem_caseC1} for repeat insertions, we have that $z_2=y_1y_1\cdots y_py_p$ where $y_i=f^i(u)$ for $1\leq i\leq p$, $f(y_p)=u$ and $p=\frac{k_2-\ell_1}{2\nu}$ (see Figure \ref{C-noz2-z3-RepRet}).
It follows that $f(y_p)f(y_p)$ is a repeat word in $w_2$ of size $|u|$.
Then, by Lemma \ref{lem_comp}, $f(y_p)=u$ implies that $|f(y_p)|=1$ or $|u|=1$, and the insertions must be trivial. 

\noindent
(II) Let $k_1\neq\ell_1$, so that $z_1\neq\epsilon$.
If $0<|z_1|<|u|$,  by  the proof of Proposition \ref{prop_caseC2} for case (1.a), we have that
$$
z_2=y_1x_1vy_2f(x_1)f(v)\cdots y_pf^{p-1}(x_1)f^{p-1}(v) \text{ with } p=\frac{k_2-\ell_1}{2\nu},
$$
where $x_1=z_1$, $v$ is the nonempty prefix of $z_2$ such that $x_1v=f(u)$, $y_i=f^{i-1}(v)f^i(x_1)$ for $1\leq i\leq p$ and $f(y_p)=f(f^{p-1}(v)f^p(x_1))=f^p(v)f^{p+1}(x_1)=u$ (see Figure \ref{C-z1_short}).
It follows that $f(y_pf^{p-1}(x_1)f^{p-1}(v))=uf^p(x_1)f^p(v)$, where $f^p(x_1)$ is a proper suffix of $y_p$ and $f^p(v)$ is a proper prefix of $u$.
Because $u^R\sqsubseteq w_2$, this contradicts the fact that $w_2$ is a DOW.

So assume that $|z_1|\geq |u|$, and let $y_1$ be the prefix of $z_2$ such that $|y_1|=|u|$ (exists by Lemma~\ref{lem_z3}).
By applying arguments similar to the ones used in the proof of Proposition \ref{prop_caseC2} for case (1.b) (with the same notation), we obtain that there exists a positive integer $p\geq 1$ such that $f(y_p)=u$.
If $p<h$, then $z_2=y_1x_1\cdots y_px_p$ and $z_3=x_{p+1}\cdots x_hy_1\cdots y_p$ (see Figure \ref{C-z1_general}).
On the other hand, for $p\geq h$ we have 
$$
z_2=y_1x_1y_2x_2\cdots x_{h-1}y_hx_hvy_{h+1}f(x_h)f(v)y_{h+2}\cdots y_pf^{p-h}(x_h)f^{p-h}(v)
$$
and $z_3=f^{p-h+1}(x_h)y_{p-h+2}\cdots y_p$ (see Figure \ref{C-z1_general}).
In both cases, $p=\frac{k_2-\ell_1}{2\nu}$ and $f(y_p)f(y_p)$ is a repeat word in $w_2$ of size $|u|$.
Then, by Lemma \ref{lem_comp}, $f(y_p)=u$ implies that $|f(y_p)|=1$ or $|u|=1$
contradicting the fact that the insertions are nontrivial.
\end{proof}

\section{Summary}\label{s_sum}

In this section, we summarize the results from Section~\ref{s_equiv}.
First, we collect the main results for Section~\ref{s_equiv}, generalizing appropriately.
Let $w\in\Sigma_{DOW}$ be in ascending order of length $n$ and $\mathcal{I}_i(\nu,k_i,\ell_i)$ for $i=1,2$ be two distinct insertions into $w$.
Assume that $k_1< k_2$ without loss of generality.
Write $w=z_0z_1z_2z_3z_4$ 
 and assume that the four locations of insertions (two from each $\mc I_1$ and $\mc I_2$) appear between words $z_i$ and $z_{i+1}$ for $i=0,\ldots,3$. In particular $|z_0|=k_1-1$.
In the following result we summarize Propositions \ref{prop_caseA}, \ref{prop_caseB}, and \ref{prop_caseC2}.
Observe that, by Theorem \ref{thm_repret}, the classification below is exhaustive.

\begin{theorem}\label{thm_main}
Let $w \in \Sigma_{DOW}$, and let $\mathcal{I}_i(\nu, k_{i}, \ell_{i})$ for $i=1,2$ be two distinct insertions into $w$.
Let $w_{i} = w \star \mathcal{I}(\nu, k_{i}, \ell_{i})$ for $i = 1, 2$.
Write $w=z_0z_1z_2z_3z_4$ using the notation above.
If $w_{1} \sim w_{2}$, then the following hold:
\vskip.5\baselineskip
\centerline{
\begin{tabular}{c|c|c}
& $\mathcal{I}_1,\mathcal{I}_2\in\Rep$ & $\mathcal{I}_1,\mathcal{I}_2\in\Ret$\\
\hline
Interleaving & \multirow{2}{*}{$z_1z_3$ is a repeat word} & \multirow{2}{*}{$z_1z_3\sim\text{Int}\left(\frac{|z_1|}{\nu}, \nu\right)$}\\
$k_1<k_2\leq\ell_1<\ell_2$ &  & \\
\hline
Nested & \multirow{2}{*}{$z_1z_3\sim\text{Nes}\left(\frac{|z_1|}{\nu},\nu\right)$} & \multirow{2}{*}{$z_1z_3$ is a return word}\\
$k_1<k_2\leq\ell_2<\ell_1$ &  & \\
\hline
Sequential & \multirow{2}{*}{$z_1z_2z_3\sim T_\rho\left(\nu,|z_1|,\frac{|z_2|}{2\nu}\right)$}  & \multirow{2}{*}{$z_1z_2z_3\sim T_\tau\left(\nu,|z_1|,\frac{|z_2|}{2\nu}\right)$}\\
$k_1\leq\ell_1<k_2\leq\ell_2$ &  & 
\end{tabular}
}
\vskip.5\baselineskip
\end{theorem}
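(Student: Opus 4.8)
The plan is to obtain Theorem~\ref{thm_main} by consolidating Propositions~\ref{prop_caseA}, \ref{prop_caseB}, and \ref{prop_caseC2} with Theorem~\ref{thm_repret}. I would first pin down the shape of the table. If $\nu\ge 2$ then $\mathcal I_1,\mathcal I_2$ are nontrivial, so Theorem~\ref{thm_repret} forces them into the same class and only the columns $\mathcal I_1,\mathcal I_2\in\Rep$ and $\mathcal I_1,\mathcal I_2\in\Ret$ occur; if $\nu=1$ both insertions are trivial, a trivial repeat insertion is also a trivial return insertion, so the pair may be assigned to either column, and the two entries in each row name the same object because $T_\rho(1,m,j)=T_\tau(1,m,j)$, $\text{Int}(h,1)$ is a repeat word, and $\text{Nes}(h,1)$ is a return word. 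Since $k_1<k_2$ with $k_1\le\ell_1$ and $k_2\le\ell_2$, the position of $\ell_1$ relative to $\ell_2$ leaves, up to the reversal symmetry $\bigl(w\star\mathcal I(\nu,k,\ell)\bigr)^R\sim w^R\star\mathcal I(\nu,|w|-\ell+2,|w|-k+2)$ of Section~\ref{s_backg}, exactly the three cases $k_1<k_2\le\ell_1<\ell_2$, $k_1<k_2\le\ell_2<\ell_1$, $k_1\le\ell_1<k_2\le\ell_2$, i.e.\ the interleaving, nested, and sequential rows.

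Next I would reduce to the normalized situation of Section~\ref{s_equiv}, in which $k_1=1$ and $\max\{\ell_1,\ell_2\}=n+1$. In the general decomposition $w=z_0z_1z_2z_3z_4$ the smallest of the four insertion indices is $k_1$ and the largest is $\max\{\ell_1,\ell_2\}$, so $z_0$ (of length $k_1-1$) is a common prefix and $z_4$ (of length $n+1-\max\{\ell_1,\ell_2\}$) a common suffix of both $w_1$ and $w_2$; matching prefixes of length $|z_0|$ and suffixes of length $|z_4|$ under $f$ yields $f(z_0)=z_0$ and $f(z_4)=z_4$. Cancelling these fixed blocks places the pair $\mathcal I_1,\mathcal I_2$ into the setting of Section~\ref{s_equiv} applied to the factor $z_1z_2z_3$, and since $\Sigma[u]$ is disjoint from $\Sigma[w]\supseteq\Sigma[z_1z_2z_3]$, the hypotheses of Propositions~\ref{prop_caseA}, \ref{prop_caseB}, and \ref{prop_caseC2} are met. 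The one place those proofs invoke ``$z_2$ is a double occurrence word'' is to deduce $|z_1|=|z_3|$ in the interleaving and nested cases; there $z_2$ occupies the same index range in $w_1$ and in $w_2$, so $f(z_2)=z_2$, and together with $z_1,z_3\in\Sigma_{SOW}$ this forces $\Sigma[z_1]=\Sigma[z_3]$ and hence $|z_1|=|z_3|$, recovering exactly what is needed.

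It then remains to quote the three propositions and rewrite their parameters in terms of the $z_i$. In the interleaving and nested cases $|z_1|=k_2-k_1$: Proposition~\ref{prop_caseA} gives that $z_1z_3$ is a repeat word when $\mathcal I_1,\mathcal I_2\in\Rep$ and $z_1z_3\sim\text{Int}(|z_1|/\nu,\nu)$ when $\mathcal I_1,\mathcal I_2\in\Ret$, while Proposition~\ref{prop_caseB} gives $z_1z_3\sim\text{Nes}(|z_1|/\nu,\nu)$ and $z_1z_3$ a return word in the same two subcases. In the sequential case $|z_2|=k_2-\ell_1$, and Proposition~\ref{prop_caseC2} gives $|z_1|=|z_3|$ together with $z_1z_2z_3\sim T_\rho(\nu,|z_1|,|z_2|/(2\nu))$ or $T_\tau(\nu,|z_1|,|z_2|/(2\nu))$. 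This is the displayed table, and by Theorem~\ref{thm_repret} (with the $\nu=1$ remark above) no other row/column combination can occur. The step I expect to require the most care is the reduction in the second paragraph: one must verify that after stripping $z_0$ and $z_4$ the residual pair really does satisfy the hypotheses of the three propositions even though $z_1z_2z_3$ need not itself be a double occurrence word, and it is the observation $f(z_2)=z_2$ that makes this go through.
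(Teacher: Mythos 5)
Your proposal is correct and follows the paper's own route: the paper proves Theorem~\ref{thm_main} simply by citing Propositions~\ref{prop_caseA}, \ref{prop_caseB}, and \ref{prop_caseC2} together with Theorem~\ref{thm_repret} for exhaustiveness, which is exactly the consolidation you carry out. Your second paragraph in fact supplies more detail than the paper, whose reduction to $k_1=1$ and $\max\{\ell_1,\ell_2\}=n+1$ is asserted only as a ``without loss of generality''; note that to conclude $\Sigma[z_1]=\Sigma[z_3]$ you need, beyond $f(z_0)=z_0$, $f(z_2)=z_2$, $f(z_4)=z_4$, the one-line observation that $\Sigma[z_1]\cup\Sigma[z_3]$ lies in the forward $f$-orbit of $\Sigma[u]$ and hence contains no fixed points of $f$, so no symbol of $z_1$ or $z_3$ can have its partner in $z_0$, $z_2$, or $z_4$.
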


\begin{remark}\label{rem_struc}
Let $w,w_1,w_2 \in \Sigma_{DOW}$ be as in Theorem~\ref{thm_main}; then the structure of the words $w_1$ and $w_2$ obtained by the insertions satisfies the following properties ($uu'$ is the inserted repeat or return word):
\vskip\baselineskip
\centerline{
\begin{tabular}{c|c|c}
& $\mathcal{I}_1,\mathcal{I}_2\in\Rep$ & $\mathcal{I}_1,\mathcal{I}_2\in\Ret$\\
\hline
\multirow{2}{*}{Interleaving} & \multirow{2}{*}{$z_1uz_3u$ is a repeat word} & \multirow{2}{*}{$z_1uz_3u^R\sim\text{Int}\left(\frac{|z_1|}{\nu}+1, \nu\right)$}\\
& & \\
\hline
\multirow{2}{*}{Nested} & \multirow{2}{*}{$z_1uuz_3\sim\text{Nes}\left(\frac{|z_1|}{\nu}+1,\nu\right)$} & \multirow{2}{*}{$z_1uu^Rz_3$ is a return word}\\
& & \\
\hline
\multirow{2}{*}{Sequential} & \multirow{2}{*}{$z_1z_2uz_3u\sim T_\rho\left(\nu,|z_1|,\frac{|z_2|}{2\nu}+1\right)$} & \multirow{2}{*}{$z_1z_2uz_3u^R\sim T_\tau\left(\nu, |z_1|,\frac{|z_2|}{2\nu}+1\right)$}\\
& &
\end{tabular}
}
\end{remark}

We generalize Proposition~\ref{prop_AB-conv} and Corollary~\ref{cor_C-conv} with the following theorem.

\begin{theorem}\label{thm_conv}
Let $w\in\Sigma_{DOW}$ and $\nu\in\mathbb{N}$.
Suppose $z_1z_2z_3\sqsubseteq w$ for some $z_1,z_3\in\Sigma_{SOW}$ and $z_2\in\Sigma^\ast$.
If one of the following (1) - (6) holds, then there exist two distinct insertions, $\mathcal{I}_1(\nu,k_1,\ell_1)$ and $\mathcal{I}_2(\nu,k_2,\ell_2)$, such that $w\star\mathcal{I}_1(\nu,k_1,\ell_1) \sim w\star\mathcal{I}_2(\nu,k_2,\ell_2)$.
	\vskip.5\baselineskip
    \begin{tabular}{ll}
	$(1)\quad z_1z_3$ is a repeat word in $w$ & ($\mathcal{I}_1,\mathcal{I}_2\in\Rep$ and interleaving insertions)\\
    $(2)\quad z_1z_3\sim\text{Int}(h,\nu)$ for some $h\geq 1$ & ($\mathcal{I}_1,\mathcal{I}_2\in\Ret$ and interleaving insertions)\\
    $(3)\quad z_1z_3\sim\text{Nes}(h,\nu)$ for some $h\geq 1$ & ($\mathcal{I}_1,\mathcal{I}_2\in\Rep$ and nested insertions)\\
    $(4)\quad z_1z_3$ is a return word in $w$ & ($\mathcal{I}_1,\mathcal{I}_2\in\Ret$ and nested insertions)\\
    $(5)\quad z_1z_2z_3\sim T_\mathcal{\rho}(\nu,m,j)$ &
    ($\mathcal{I}_1,\mathcal{I}_2\in\Rep$ and sequential insertions)\\
    $\quad\quad$ for some $m\geq 0$ and $j\geq 1$\\
    $(6)\quad z_1z_2z_3\sim T_\mathcal{\tau}(\nu,m,j)$ & 
    ($\mathcal{I}_1,\mathcal{I}_2\in\Ret$ and sequential insertions)\\
    $\quad\quad$ for some $m\geq 0$ and $j\geq 1$\\
	\end{tabular}
    \vskip.5\baselineskip
\end{theorem}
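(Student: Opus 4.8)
The statement is essentially the converse of Theorem~\ref{thm_main}: each of the six structural hypotheses on a factor $z_1z_2z_3$ of $w$ should be enough to exhibit the required pair of distinct insertions. My plan is to reduce everything to the two ``source'' results already proved — Proposition~\ref{prop_AB-conv} for cases (1)--(4) and Corollary~\ref{cor_C-conv} for cases (5)--(6) — by localizing the construction to the factor $z_1z_2z_3$ and then transporting it to $w$ via Remark~\ref{lem_equal-ins}. Concretely, write $w = z_0\,(z_1z_2z_3)\,z_4$ with $|z_0| = r$, so that the four insertion positions I construct inside the factor get shifted by $r$; since insertions only touch the positions and the inserted word has fresh symbols, the equivalence $t\star\mc I_1 \sim t\star\mc I_2$ for the factor $t$ lifts to $w\star\mc I_1 \sim w\star\mc I_2$ with indices $k_i+r,\ell_i+r$. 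The only subtlety is that Proposition~\ref{prop_AB-conv} is stated for $t$ a \emph{full} DOW in one of four canonical shapes, whereas here $z_1z_3$ (or $z_1z_2z_3$) is only a factor and is only \emph{equivalent} to the canonical shape; I handle the equivalence by the second half of Remark~\ref{lem_equal-ins} (equal insertions are preserved under $\sim$) and handle ``factor rather than word'' by the observation that the $z_2$-part (resp. the non-$z_1z_3$ part) of $w$ just rides along untouched.

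\textbf{Cases (1)--(4): interleaving and nested.}
For case (1), $z_1z_3$ a repeat word means $z_1z_3 = pqp q$ is not quite the right reading; rather $z_1z_3$ being a repeat word in $w$ says $z_1 = z_3$ up to the splitting, i.e. $z_1z_3 = vv$ for a SOW $v$ with $v$ the common value — more precisely $z_1$ and $z_3$ are SOWs with $z_1 \sim z_3$ in the appropriate sense, matching the shape $t = 12\cdots m\,12\cdots m$ of case (1) in Proposition~\ref{prop_AB-conv} with $m = |z_1|$. I take the two insertions $\mc I_1(\nu,\,|z_0|+1,\,|z_0|+|z_1|+1)$ and $\mc I_2(\nu,\,|z_0|+|z_1|+1,\,|z_0|+|z_1z_2z_3|+1)$, i.e. insert $u$ just before $z_1$ and just before $z_3$ in one insertion, and just after $z_1$ and just after $z_3$ in the other. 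The verification that $w\star\mc I_1 \sim w\star\mc I_2$ is exactly the one-line identity displayed as item (1) in the proof of Proposition~\ref{prop_AB-conv}, applied to the factor $z_1 z_2 z_3$ with $z_2$ inert in the middle — the equivalence map fixes $z_0, z_2, z_4$ and acts on $u$ and on $z_1, z_3$ as in that proof. Cases (2),(3),(4) are identical with the shapes $\text{Int}(h,\nu)$, $\text{Nes}(h,\nu)$, $12\cdots mm\cdots 21$ respectively; I just quote the corresponding displayed identity (2),(3),(4) from the proof of Proposition~\ref{prop_AB-conv} and note $h = |z_1|/\nu$.

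\textbf{Cases (5)--(6): sequential.}
Here $z_1z_2z_3 \sim T_\rho(\nu,m,j)$ (resp. $T_\tau$), with $m = |z_1| = |z_3|$ and $j = |z_2|/(2\nu)$ forced by Definition~\ref{tangled}. By Corollary~\ref{cor_C-conv}, $T_\rho(\nu,m,j-1)\star\rho(\nu,1,m+1) \sim T_\rho(\nu,m,j)$, and unwinding Definition~\ref{tangled} one more step, $T_\rho(\nu,m,j) = T_\rho(\nu,m,j-1)\star\rho(\nu,\,|s_{j-1}|-m+1,\,|s_{j-1}|+1)$. So inside $w$ I take $\mc I_1$ to be the insertion realizing the ``level $j-1 \to j$ at the front'' step, namely $\rho(\nu,\,|z_0|+1,\,|z_0|+m+1)$ applied to the word $w' = z_0\,T_\rho(\nu,m,j-1)\,z_4$, and $\mc I_2$ the ``level $j-1\to j$ at the back'' step; both produce (a word equivalent to) $z_0\,T_\rho(\nu,m,j)\,z_4 \sim w$ up to relabeling, and the equivalence between the two results is precisely the content of Corollary~\ref{cor_C-conv} combined with Proposition~\ref{prop_palind} (the tangled cord is a palindrome, so the front-step and back-step outcomes are equivalent). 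Slightly more carefully: $w\star\mc I_1 \sim w'\star\rho(\nu,1,m+1)$-at-level-$j$, $w\star\mc I_2$ is its reverse-type image, and palindromicity of $T_\rho(\nu,m,j)$ together with the reversal identity $\left(w\star\mc I(\nu,k,\ell)\right)^R \sim w^R\star\mc I(\nu,|w|-\ell+2,|w|-k+2)$ (stated in Section~\ref{s_backg}) closes the loop. Case (6) is the same with $\tau$ in place of $\rho$.

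\textbf{Main obstacle.}
The routine parts — shifting indices by $|z_0|$, checking the displayed word identities — are genuinely routine. The one place that needs care is making the reduction \emph{from a factor to a word} fully rigorous: I must argue that when I apply an insertion to $w$ at positions lying strictly inside the $z_1z_2z_3$ block, the equivalence map witnessing $w\star\mc I_1\sim w\star\mc I_2$ can be taken to restrict to the corresponding map on $t\star\mc I_1 \sim t\star\mc I_2$ while fixing $z_0$ and $z_4$, and conversely. This is true because the inserted symbols are disjoint from $\Sigma[w]$ and because $z_0, z_4$ contain symbols that may also occur inside $z_1z_2z_3$ — so the equivalence map on the factor has to be extended consistently to all of $w$, which is possible exactly when the map from the Proposition/Corollary proof acts as the identity on $\Sigma[z_1z_2z_3] \cap (\Sigma[z_0]\cup\Sigma[z_4])$. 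Inspecting those proofs, the maps used do fix all of $\Sigma[w]$ (they only permute the fresh symbols of $u$ and, in the sequential case, act within $\Sigma[u]$'s orbit), so the extension is trivial; I will state this explicitly as the key lemma-free observation that makes the localization legitimate, and otherwise the theorem follows by direct appeal to Proposition~\ref{prop_AB-conv} and Corollary~\ref{cor_C-conv}.
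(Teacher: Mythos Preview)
Your overall strategy---reduce to Proposition~\ref{prop_AB-conv} and Corollary~\ref{cor_C-conv} on the factor, then extend the equivalence map to $w$---is exactly the intended one, and the paper offers no more than this. But two of your justifications are wrong as written.

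\textbf{The extension argument.} You claim that the equivalence maps from the proofs of Proposition~\ref{prop_AB-conv} and Corollary~\ref{cor_C-conv} ``fix all of $\Sigma[w]$ (they only permute the fresh symbols of $u$\ldots)''. This is false. In case~(1), for example, the map taking $(u z_1)(z_2)(u z_1)$ to $(z_1 u)(z_2)(z_1 u)$ is a cyclic permutation of $\Sigma[u]\cup\Sigma[z_1]$; it certainly moves the symbols of $z_1\subset\Sigma[w]$. The same holds in every case (look at Example~\ref{ex_c}: the map sends $1\mapsto 3$, etc.). The correct observation is different and simpler: in each of (1)--(4) the hypothesis forces $z_1z_3$ to be a DOW (repeat word, return word, $\text{Int}$, or $\text{Nes}$), and in (5)--(6) the hypothesis forces $z_1z_2z_3$ to be a DOW (a tangled cord). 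Since $w$ is a DOW, each symbol of this sub-DOW already occurs twice inside it and hence does not occur in $z_0$, $z_4$ (nor in $z_2$ for cases (1)--(4)). So the equivalence map on the factor extends by the identity on the complementary alphabet, and this is what makes the localization legitimate.

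\textbf{The sequential cases.} Your argument for (5)--(6) inserts into the wrong word. You set $w' = z_0\,T_\rho(\nu,m,j-1)\,z_4$ and produce two insertions into $w'$ that both yield $\sim z_0\,T_\rho(\nu,m,j)\,z_4\sim w$. But the theorem asks for two insertions into $w$ itself, and $w$ contains the level-$j$ tangled cord, not level $j-1$. The fix is a one-line index shift: combine Corollary~\ref{cor_C-conv},
\[
T_\rho(\nu,m,j)\star\rho(\nu,1,m+1)\ \sim\ T_\rho(\nu,m,j+1),
\]
with Definition~\ref{tangled},
\[
T_\rho(\nu,m,j)\star\rho(\nu,\,|s_j|-m+1,\,|s_j|+1)\ =\ T_\rho(\nu,m,j+1),
\]
to obtain two distinct sequential repeat insertions into $T_\rho(\nu,m,j)$ with equivalent results. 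Transfer to $z_1z_2z_3$ via Remark~\ref{lem_equal-ins}, and then to $w$ by the extension argument above. Proposition~\ref{prop_palind} (palindromicity) is not needed here and your appeal to it, together with the reversal identity, is a detour that does not close the gap you opened by working at level $j-1$.
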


\begin{remark}
Observe that $11$ is the class representative for trivial repeat and return words in $\Sigma^\ast$, and 
$$
\text{Int}(1,1)=\text{Nes}(1,1)=T_\rho(1,0,1)=T_\tau(1,0,1)=11.
$$
By Theorem~\ref{thm_conv}, it follows that for any $w\in\Sigma_{DOW}\setminus\{\epsilon\}$, any $a\in\Sigma[w]$ can serve as $z_1=a$ and $z_3=a$, 
and for 
any $\nu\in\mathbb{N}$,
there exist a pair of interleaving (nested resp.) insertions into $w$, $\rho(\nu,k_1,\ell_1)$ and $\rho(\nu,k_2,\ell_2)$ ($\tau(\nu,k_1,\ell_1)$ and $\tau(\nu,k_2,\ell_2)$ resp.),  which yield equivalent words.
Moreover, if $aa\sqsubseteq w$, then there is also a pair of sequential and trivial insertions into $w$ which yield equivalent words.
\end{remark}

\section{Conclusion}
In this paper, we characterized the structure of a DOW $w$ which allows two distinct insertions to yield equivalent words.
In particular, we proved that a nontrivial repeat insertion and a nontrivial return insertion into $w$ cannot produce equivalent resulting words.
We summarized these results in Section~\ref{s_sum}; with them, we can consider a ``word graph'' in which vertices are equivalence classes of DOWs (with respect to $\sim$) and directed edges exist from $[w]_\sim$ to $[w']_\sim$ if $w'\sim w\star\mathcal{I}(\nu,k,\ell)$ for some insertion $\mathcal{I}(\nu,k,\ell)$.
Our results characterize when two insertions define the same edge in this graph. 
It is also natural to define the distance between $[w]_\sim$ and $[w']_\sim$ as the length of a shortest path from $[w]_\sim$ to $[w']_\sim$ in the word graph.
The notion of determining the distance from the empty word $[\epsilon]_\sim$ to $[w]_\sim$ may describe the complexity of a particular DNA rearrangement processes in the ciliate {\em Oxytricha trifallax} \cite{Burns2016,Jonoska2017}.
It may also be of interested to compare different paths between two given equivalence classes and to characterize the subgraphs which may appear in such a word graph.\\

{\bf Acknowledgement.}
This work is supported by NIH grant R01 GM109459, Simons Foundation grant 594594, and NSF grants CCF-1526485, DMS-1800443, and DMS-1764406.

\bibliographystyle{fundam}
\bibliography{citations}

\end{document}